\documentclass[12pt]{article}

\usepackage[T2A]{fontenc}
\usepackage[utf8]{inputenc}

\usepackage{amsmath}
\usepackage{amsthm}
\usepackage{amssymb}
\usepackage{bm}

\usepackage{url}
\usepackage{hyperref}

\usepackage[backend=bibtex8, bibencoding=utf8]{biblatex}
\usepackage{xurl}
\renewbibmacro{in:}{}
\addbibresource{sgn.bib}

\usepackage{tikz}
\usetikzlibrary{backgrounds}
\usetikzlibrary{arrows.meta}

\newtheorem{theorem}{Theorem}[section]
\newtheorem{proposition}{Proposition}[section]
\newtheorem{lemma}{Lemma}[section]
\newtheorem{definition}{Definition}
\newtheorem{remark}{Remark}[section]

\newtheoremstyle{named}{}{}{\itshape}{}{\bfseries}{}{.5em}{\thmnote{#3}}
\theoremstyle{named}
\newtheorem*{namedtheorem}{Theorem}

\newcommand{\Mod}[1]{\ (\mathrm{mod}\ #1)}

\begin{document}
	\begin{center}
		\bf \Large Lower bounds on the measure of the support of positive and negative parts of trigonometric polynomials.
	\end{center}

	\begin{center}
		Abdulamin Ismailov\footnote{E-mail: \href{mailto:}{\nolinkurl{arismailov@edu.hse.ru}}}
	\end{center}

	\begin{abstract}
		For a finite set of natural numbers $D$ consider a complex polynomial of the form $f(z) = \sum_{d \in D} c_d z^d$. Let $\rho_+(f)$ and $\rho_-(f)$ be the fractions of the unit circle that $f$ sends to the right($\operatorname{Re} f(z) > 0$) and left($\operatorname{Re} f(z) < 0$) half-planes, respectively. Note that $\operatorname{Re} f(z)$ is a real trigonometric polynomial, whose
		allowed set of frequencies is $D$.
		 It turns out that $\min(\rho_+(f), \rho_-(f))$ is always bounded from below by a numerical characteristic $\alpha(D)$ of our set $D$ which comes from a seemingly unrelated combinatorial problem. Furthermore, this result could be generalized to power series, almost periodic functions, functions of several variables and multivalued algebraic functions.
	\end{abstract}

	\tableofcontents

	\section{Introduction.}

	For a finite set of natural numbers $D$ consider complex non-zero polynomials of the form
	\begin{equation}	
		\label{the_form}
		f(z) = \sum_{d \in D} c_d z^d
	\end{equation}
	Each such polynomial sends a certain fraction $\rho_+(f)$ of the unit circle defined by $|z| = 1$ into the right half-plane($\operatorname{Re} f(z) > 0$) and a certain fraction $\rho_-(f)$ into the left half-plane($\operatorname{Re} f(z) < 0$). Note that the function $\operatorname{Re} f(e^{i \theta})$ can be thought of as a real trigonometric polynomial with frequencies in the set $D$.

	We are interested in the lower bounds on the value $\min(\rho_+(f), \rho_-(f))$, which, roughly speaking, measures how far can the unit circle be pushed into one of the two half-planes by a mapping of the above form. It turns out that $\min(\rho_+(f), \rho_-(f))$ is bounded from below by a number $\alpha(D)$ representing a certain characteristic of the set $D$ which comes from the following combinatorial problem.

	The set $D$ can be thought of as the set of jumps(or distances) that defines a circulant graph $G(n, D)$ on $n$ vertices. For example,
	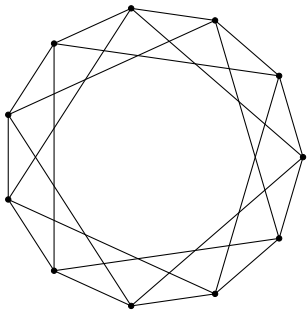
\begin{figure}[!h]
		\begin{tikzpicture}
			\foreach \i in {1,...,11} {
				\pgfmathparse{\i + 1}
				\edef\j{\pgfmathresult}

				\draw (360 / 11 * \i: 2cm) -- (360 / 11 * \j: 2cm);
			}

			\foreach \i in {1,...,11} {
				\pgfmathparse{\i + 3}
				\edef\j{\pgfmathresult}

				\draw (360 / 11 * \i: 2cm) -- (360 / 11 * \j: 2cm);
			}

			\foreach \i in {1,...,11} {
				\filldraw[black] (360 / 11 * \i: 2cm) circle (1pt);
			}
		\end{tikzpicture}
		\centering
		\caption{$G(11, D)$ for $D = \{1, 3\}$}
	\end{figure}

	\noindent Here $1 \in D$ means that we connect adjacent vertices, and $3 \in D$ means that we connect pairs of vertices at distance $3$.  This suggests that circulant graphs $G(n, D)$ are <<homogeneous>> and that some of their numerical characteristics such as the size of the maximum independent set $\alpha(G(n, D))$ might be <<proportional>> to the number of vertices. It turns out that the following limit exists
	\[
		\alpha(D) = \lim_{n \to \infty} \frac{\alpha(G(n, D))}{n}
	\]
	The quantity $\alpha(D)$ known as the independence ratio was studied in \cite{carraher2016independence} and \cite{lih1999star}.

	In Theorem \ref{connect} we show that for each function $f$ of the form (\ref{the_form})
	\[
		\alpha(D) \leq \min(\rho_+(f), \rho_-(f))
	\]

	This lower bound on $\min(\rho_+(f), \rho_-(f))$ was established in \cite{MR741802}, \cite{MR1438987}, \cite{10.2307/27642333}, \cite{MR1459841}, \cite{MR1963176}, \cite{Ulanovskii2006} for special cases of $D$(see also \cite{MR1620046}, \cite{Kozma2006OnTG}, \cite{10.1137/0514022}, \cite{MR1629883}, \cite{MR1963176}).

	One notable corollary is
	\begin{namedtheorem}[Theorem \ref{spectrum}.]
		Let $\Theta = \{\theta_1, \ldots, \theta_n\}$ be a set of positive rational numbers. Suppose $\Theta$ is contained in the union of the segments $[a_i; b_i]$ with $0 < a_i \leq b_i$
		\[
			\Theta \subset [a_1; b_1] \cup \ldots \cup [a_m; b_m]
		\]
		Let $F$ be a non-zero periodic function defined by
		\[
			F(x) = \sum_{\theta \in \Theta} a_{\theta}\cos(2\pi \theta x) + b_{\theta}\sin(2\pi \theta x) \textrm{, where } a_{\theta}, b_{\theta} \in \mathbb{R}
		\]
		Then both $\rho_+(F)$ and $\rho_-(F)$ can be bounded from below as follows
		\[
			\min(\rho_+(F), \rho_-(F)) \geq \frac{1}{1 + \frac{b_1}{a_1}} \ldots \frac{1}{1 + \frac{b_n}{a_n}}
		\]
	\end{namedtheorem}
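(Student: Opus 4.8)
The plan is to combine Theorem~\ref{connect} with an explicit lower bound on $\alpha(D)$ valid whenever $D$ is covered by finitely many intervals bounded away from $0$.

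\emph{Reduction to a statement about $\alpha(D)$.} Since the $\theta \in \Theta$ are positive rationals, fix a common denominator $N$ so that $D := \{N\theta : \theta \in \Theta\}$ is a finite subset of $\mathbb{N}$; then $D \subseteq [Na_1; Nb_1] \cup \ldots \cup [Na_m; Nb_m]$. Using $a_\theta\cos(2\pi\theta x) + b_\theta\sin(2\pi\theta x) = \operatorname{Re}\!\big((a_\theta - i b_\theta)e^{2\pi i \theta x}\big)$ and substituting $z = e^{2\pi i x/N}$, so that $e^{2\pi i\theta x} = z^{N\theta}$, we obtain $F(x) = \operatorname{Re} f(z)$, where $f(z) = \sum_{d \in D} c_d z^d$ has the form~(\ref{the_form}) with $c_{N\theta} = a_\theta - i b_\theta$, and $f \not\equiv 0$ because $F \not\equiv 0$. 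As $x$ ranges over a period $[0;N)$ of $F$ the point $z$ runs over the unit circle exactly once, so $\rho_{\pm}(F) = \rho_{\pm}(f)$, and Theorem~\ref{connect} gives $\min(\rho_+(F),\rho_-(F)) \geq \alpha(D)$. Since $\tfrac{1}{1+b_i/a_i} = \tfrac{a_i}{a_i+b_i} = \tfrac{Na_i}{Na_i+Nb_i}$, the claimed inequality is invariant under rescaling $D$, so it suffices to prove: if $D \subseteq \mathbb{N}$ is finite with $D \subseteq [A_1;B_1]\cup\ldots\cup[A_m;B_m]$ and $0 < A_i \leq B_i$ for all $i$, then $\alpha(D) \geq \prod_{i=1}^{m} \tfrac{A_i}{A_i+B_i}$.

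\emph{The building blocks.} For each $i$ set $p_i := A_i + B_i$ and let
\[
  T_i = \{\, x \in \mathbb{Z} : x \bmod p_i \in \{0,1,\ldots,A_i-1\} \,\},
\]
a $p_i$-periodic set of density $A_i/p_i$. An elementary computation gives $T_i - T_i = \{\, e \in \mathbb{Z} : e \bmod p_i \notin [A_i;B_i] \,\}$; in particular, for every $d$ with $A_i \leq d \leq B_i$ we have $d \bmod p_i = d \in [A_i;B_i]$ and $(-d) \bmod p_i = p_i - d \in [A_i;B_i]$, so neither $d$ nor $-d$ is a difference of two elements of $T_i$, and the same holds for every translate $T_i + s$. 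Now for a tuple $s = (s_1,\ldots,s_m) \in \mathbb{Z}^m$ put $T(s) := \bigcap_{i=1}^m (T_i + s_i)$. Every $d \in D$ lies in some interval $[A_j;B_j]$; fixing such a $j = j(d)$, we see that $\pm d$ is not a difference of two elements of $T_j + s_j \supseteq T(s)$, so $T(s)$ never contains two elements differing by an element of $D$ or of $-D$. Choosing the $s_i$ uniformly and independently, the probability that a fixed integer $x$ belongs to $T(s)$ equals $\prod_{i=1}^m \Pr[\,x - s_i \in T_i\,] = \prod_{i=1}^m \tfrac{A_i}{p_i}$; averaging over one common period $L := \operatorname{lcm}(p_1,\ldots,p_m)$ therefore produces a tuple $s^{\ast}$ for which the density of $T(s^{\ast})$ is at least $\prod_{i=1}^m \tfrac{A_i}{p_i}$.

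\emph{Passing to $G_n$ and finishing.} Let $n$ be a multiple of $L$ (so $n > \max D$) and reduce $T(s^{\ast})$ modulo $n$; this gives a subset of $\mathbb{Z}/n\mathbb{Z}$ of size at least $n\prod_i \tfrac{A_i}{p_i}$. It is independent in $G_n$: the only differences arising modulo $n$ but not in $\mathbb{Z}$ have the form $n - d$ with $d \in D$, and since $p_{j(d)} \mid n$ we get $(n-d) \bmod p_{j(d)} = p_{j(d)} - d \in [A_{j(d)};B_{j(d)}]$, so $n - d$ is again excluded. Hence $\alpha(G_n)/n \geq \prod_i \tfrac{A_i}{p_i}$ for all $n \in L\mathbb{N}$, and since the limit $\alpha(D) = \lim_{n\to\infty}\alpha(G_n)/n$ exists it must satisfy $\alpha(D) \geq \prod_{i=1}^m \tfrac{A_i}{A_i+B_i}$; combined with the reduction above this is precisely the asserted bound. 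The genuinely delicate point is this last construction: the periods $p_i = A_i+B_i$ need not be pairwise coprime, so the densities of the $T_i$ do not automatically multiply — the averaging over independent random shifts is exactly what repairs this — and one must verify that wrapping around modulo $n$ introduces no new forbidden differences, which is why $n$ is taken divisible by $L$.
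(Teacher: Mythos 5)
Your proposal is correct, and the overall strategy coincides with the paper's: reduce to integer frequencies by rescaling, rewrite $F$ as the real part of a polynomial $f$ of the form~(\ref{the_form}), invoke Theorem~\ref{connect} to pass to the combinatorial quantity $\alpha(D)$, and then prove the product lower bound on $\alpha(D)$ by constructing large independent sets in the circulant graphs $G_n$. The difference is organizational: the paper obtains $\alpha(D) \geq \prod_i \frac{1}{1 + b_i/a_i}$ by first isolating two general-purpose facts --- Proposition~\ref{consecutive} (the independent set $\{0,\ldots,a-1\}$ in $G_{a+b}$ gives $\alpha(\{a,\ldots,b\}) \geq \frac{a}{a+b}$) and Proposition~\ref{union} (the shift-averaging supermultiplicativity $\alpha(D_1 \cup D_2) \geq \alpha(D_1)\alpha(D_2)$) --- and then applying the latter iteratively. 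You instead unroll both steps into a single explicit construction: your $T_i$ is precisely the periodic lift of the paper's independent set from Proposition~\ref{consecutive}, and your independent random shifts $s_i$ perform the averaging from Proposition~\ref{union} simultaneously for all $m$ intervals rather than two at a time. The resulting bound is identical; the paper's route yields reusable lemmas (in particular Proposition~\ref{union} is used again implicitly when more corollaries are drawn), while yours is self-contained and makes the density computation transparent. One small point worth retaining from your writeup is the care taken at the wrap-around step: choosing $n$ divisible by $L = \operatorname{lcm}(p_1,\ldots,p_m)$ so that $n-d$ and $d-n$ reduce modulo $p_{j(d)}$ to the same forbidden residues as $\pm d$ is exactly what the paper's Proposition~\ref{consecutive} handles implicitly by working directly in $G_{a+b}$; your version has to address it explicitly and does so correctly.
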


	In Theorem \ref{gen_spectrum} we generalize this result to almost periodic $F(x)$, in other words, in the above statement under certain conditions we can take $\Theta$ to be a countable set of frequencies $\{\theta_v\}_{v = 1}^{\infty}$ contained in an infinite union of segments $\bigcup_{i = 1}^{\infty} [a_i; b_i]$. Another generalization for functions whose spectrum\footnote{here the spectrum can be continuous} is restricted to a single segment $[a; b]$ was established in \cite{Ulanovskii2006}.

	In Theorems \ref{an_connect}, \ref{gen_spectrum}, \ref{m_connect}, \ref{block_connect} we generalize the lower bound
	\[
		\alpha(D) \leq \min(\rho_+(f), \rho_-(f))
	\]
	to power series, almost periodic functions, functions of several variables and algebraic functions of a certain form.

	In the case of power series
	\[
		f(z) = \sum_{k = 1}^{\infty} a_k z^k
	\]
	it suffices to consider each partial sum $f_n$ separately and then take limit $n \to \infty$.

	Similarly, for almost periodic functions of the form
	\[ F(x) = \sum_{v = 1}^{\infty} a_v \cos(\theta_v x) + b_v \sin(\theta_v x) \]
	under some additional conditions we can bound $\min(\rho_+(F), \rho_-(F))$ from below by a certain quantity $\alpha(\{\theta_v\})$ analogous to $\alpha(D)$, here the combinatorial problem concerns independent subsets of the real line.

	To deal with functions of several variables
	\begin{equation}
		\label{_m_form}
		f(z_1, \ldots, z_m) = \sum_{d \in \mathbb{Z}^m} c_d z_1^{d_1}\ldots z_m^{d_m}
	\end{equation}
	we would need to consider $m$-dimensional circulant graphs, where instead of jumps $D \subset \mathbb{N}$, we have a set of vectors $D \subset \mathbb{Z}^n \setminus \{ \bm{0} \}$ that describes the monomials that are going to be allowed in the sum (\ref{_m_form}). The lower bound for the case $D = \{0, \ldots, n_1 - 1\} \times \ldots \times \{0, \ldots, n_m - 1\} \setminus \{ \bm{0} \}$ was established in \cite{MR1459841}.

	Algebraic functions can arise as multivalued functions of a variable $z$ whose graph is defined by
	\[
		p(z, t) = t^m + p_1(z) t^{m - 1} + \ldots + p_m(z) = 0
	\]
	We consider the values of $f$ on the unit circle. Let $p(z, t)$ be a characteristic polynomial of a self-conjugate non-degenerate matrix of Laurent polynomials(entries in $\mathbb{C}[z, z^{-1}]$) that is majorized by a matrix of differences $D = (D_{ij})$, each $D_{ij}$ imposes restrictions on the monomials allowed in the $ij$-th entry of our matrix(see section \ref{sub_maf}). The combinatorial problem would be concerned with a generalization of circulant graphs consisting of blocks, where the edges between the $i$-th and the $j$-th block are described by the set of differences $D_{ij}$. In this setting we have
	\[
		\alpha(D) \leq \min(\rho_+(f), \rho_-(f))
	\]

	We also note that some <<branches>> of generalization described above could be combined in a natural way. For example, power series in several variables or algebraic functions in several variables.
	
	\section{Preliminaries.}

	With a graph $G = (V; E)$ on $n$ vertices we associate a complex vector space $\mathbb{C}^V$ formed by $n$ basis vectors $\{ e_{v} \}_{v \in V}$.

	\begin{definition}
		Hermitian
		 form $\varphi$ on $\mathbb{C}^V$ is associated with the graph $G$ if $\varphi(e_u, e_v)$ is zero whenever $(u, v) \notin E$.
	\end{definition}

	Such hermitian forms are represented by hermitian matrices whose non-zero entries correspond to pairs of adjacent vertices.

	Under a change of basis $\varphi$ is equivalent to a hermitian form defined by
	\begin{equation}
		\label{canonical_form}
		(x, y) = x_1 \overline{y_1} + \ldots + x_{n_+} \overline{y_{n_+}} - x_{n_+ + 1} \overline{y_{n_+ + 1}} - \ldots - x_{n_+ + n_-}\overline{y_{n_+ + n_-}}
	\end{equation}
	for some $n_+$ and $n_-$. The number $n_+$ is called the positive index of inertia, $n_-$ the negative index of inertia. The number $n_0 = n - n_+ - n_-$ is called the nullity. The triple $(n_+, n_-, n_0)$ uniquely determined by $\varphi$ is called the signature.

	\begin{proposition}
		\label{sgn_eigen}
		Let $\varphi$ be represented by a hermitian matrix $A$. The numbers $n_+$, $n_-$ and $n_0$ are the numbers of positive, negative and zero eigenvalues of $A$, respectively, counted with multiplicity.
	\end{proposition}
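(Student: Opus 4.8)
The plan is to exhibit one explicit reduction of $\varphi$ to the canonical form (\ref{canonical_form}) in which the parameters $n_+$ and $n_-$ are manifestly the eigenvalue counts, and then to appeal to the uniqueness of the signature asserted just before the proposition.

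First I would invoke the spectral theorem for Hermitian matrices: since $A$ is Hermitian there is a unitary matrix $U$ with $U^{*}AU = \operatorname{diag}(\lambda_1, \dots, \lambda_n)$, where $\lambda_1, \dots, \lambda_n$ are the eigenvalues of $A$ counted with multiplicity and, $A$ being Hermitian, all real. The columns of $U$ form a new (orthonormal) basis of $\mathbb{C}^V$, and under a change of basis the matrix of the Hermitian form $\varphi$ transforms by the congruence $A \mapsto U^{*}AU$. The crucial observation is that for a \emph{unitary} $U$ this congruence coincides with the similarity $A \mapsto U^{-1}AU$, which is exactly why the diagonal entries produced by this particular change of basis are the eigenvalues of $A$ rather than some other numbers.

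Next, after a permutation of this basis — itself a change of basis — I may assume $\lambda_1, \dots, \lambda_p > 0$, $\lambda_{p+1}, \dots, \lambda_{p+q} < 0$, and $\lambda_{p+q+1} = \dots = \lambda_n = 0$, where $p$ and $q$ are the numbers of positive and negative eigenvalues. Rescaling the $j$-th basis vector by the factor $|\lambda_j|^{-1/2}$ whenever $\lambda_j \ne 0$ — once more a change of basis — turns the matrix into $\operatorname{diag}(1, \dots, 1, -1, \dots, -1, 0, \dots, 0)$ with exactly $p$ ones and $q$ minus-ones. This is precisely the matrix of the form (\ref{canonical_form}) with $n_+ = p$ and $n_- = q$, whence $n_0 = n - p - q$ equals the number of zero eigenvalues.

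Finally, because the triple $(n_+, n_-, n_0)$ is uniquely determined by $\varphi$ and does not depend on the particular reduction to canonical form, the values read off from the reduction just constructed must be the values appearing in the statement. The only non-bookkeeping ingredient here is the spectral theorem; the one place that genuinely needs care is the remark that congruence by a unitary matrix is the same as conjugation by it, since without it the diagonal entries obtained after the change of basis would not obviously be eigenvalues. Everything else is a routine chain of elementary basis changes.
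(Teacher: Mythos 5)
Your proof is correct and follows essentially the same route as the paper's: both invoke the spectral theorem to unitarily diagonalize $A$, note that unitary congruence agrees with similarity so the diagonal entries are the eigenvalues, and then reduce to the canonical form (\ref{canonical_form}) by rescaling. You spell out the congruence-versus-similarity point and the uniqueness (Sylvester) step a bit more explicitly, but there is no substantive difference in the argument.
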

	\begin{proof}
		Hermitian matrices are unitarily diagonalizable and only have real eigenvalues. Thus our matrix $A$ is unitarily equivalent to a diagonal matrix $\operatorname{diag}\{\lambda_1, \ldots, \lambda_n\}$, where $\lambda_1, \ldots, \lambda_n$ are the eigenvalues of $A$. This also means that $\varphi$ is equivalent to the hermitian form defined by
		\[
			(x, y) = \lambda_1 x_1 \overline{y_1} + \ldots + \lambda_n x_n \overline{y_n}
		\]
		And the latter hermitian form is equivalent to the one described in expression (\ref{canonical_form}) with $n_+$ and $n_-$ being equal to the number of positive and negative eigenvalues of $A$, respectively.
	\end{proof}

	By $\alpha(G)$ denote the size of the maximum independent set of $G$.

	\begin{proposition}
		\label{alpha_sgn}
		For an arbitrary hermitian form $\varphi$ associated with $G$
		\[
			\alpha(G) \leq \min(n_+, n_-) + n_0
		\]
	\end{proposition}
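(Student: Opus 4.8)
The plan is to exhibit a large totally isotropic subspace for $\varphi$ coming from the independent set, and then to bound the dimension of any totally isotropic subspace of a Hermitian form in terms of its signature.

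First I would pick a maximum independent set $S \subseteq V$, so $|S| = \alpha(G)$, and set $W = \operatorname{span}\{e_v : v \in S\} \subseteq \mathbb{C}^V$, a subspace of dimension $\alpha(G)$. For $u, v \in S$ with $u \neq v$ we have $(u,v) \notin E$, hence $\varphi(e_u, e_v) = 0$; and no vertex of an independent set can carry a loop, so $\varphi(e_v, e_v) = 0$ as well. By sesquilinearity $\varphi$ vanishes identically on $W$, i.e.\ $W$ is totally isotropic. So it remains to prove the purely linear-algebraic claim: if $\varphi$ has signature $(n_+, n_-, n_0)$ and $W$ is totally isotropic, then $\dim W \le \min(n_+, n_-) + n_0$.

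For that, replacing $\varphi$ by $-\varphi$ if necessary (this interchanges $n_+$ and $n_-$, fixes $n_0$, and preserves the family of totally isotropic subspaces), I may assume $n_+ \le n_-$, so the target becomes $\dim W \le n_+ + n_0$. Using Proposition~\ref{sgn_eigen} (or directly the canonical form (\ref{canonical_form})), decompose $\mathbb{C}^V = V_+ \oplus V_- \oplus V_0$, where $\varphi$ is positive definite on $V_+$, negative definite on $V_-$, and $V_0$ is the radical, with $\dim V_\pm = n_\pm$ and $\dim V_0 = n_0$. Let $\pi \colon \mathbb{C}^V \to V_+$ be the projection along $V_- \oplus V_0$ and consider $\pi|_W$. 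Its image lies in $V_+$, so $\dim \operatorname{im}(\pi|_W) \le n_+$. For the kernel: if $w \in W$ and $\pi(w) = 0$, write $w = w_- + w_0 \in V_- \oplus V_0$; then $0 = \varphi(w,w) = \varphi(w_-, w_-)$, and negative-definiteness of $\varphi$ on $V_-$ forces $w_- = 0$, i.e.\ $w \in V_0$. Hence $\ker(\pi|_W) \subseteq V_0$ and $\dim \ker(\pi|_W) \le n_0$. Rank–nullity then gives $\dim W \le n_+ + n_0$, and since $\dim W = \alpha(G)$ the proposition follows.

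The manipulations here are all routine; the only point that takes genuine thought is choosing the right projection and noticing that isotropy together with negative-definiteness on $V_-$ annihilates the $V_-$-component of $w$ — this is what keeps $\ker(\pi|_W)$ inside $V_0$, and it is the crux of the argument. One should also make sure the reduction $\varphi \mapsto -\varphi$ is harmless and that vertices of an independent set carry no loops, so that $W$ is honestly totally isotropic rather than merely isotropic off the diagonal.
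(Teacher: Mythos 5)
Your proof is correct and rests on the same key idea as the paper's: the independent set spans a totally isotropic subspace, and the dimension of such a subspace is bounded by $\min(n_+,n_-)+n_0$. Where you differ is in the linear-algebra step that delivers this bound. The paper assumes $n_+ \ge n_-$, takes an $n_+$-dimensional subspace $W_+$ on which $\varphi$ is positive definite, observes that $W_+$ meets the isotropic subspace $U$ only in $\{0\}$, and concludes from $\dim(U+W_+)\le n$ that $\dim U \le n - n_+ = n_- + n_0$. You instead normalize to $n_+\le n_-$, fix the full decomposition $V_+\oplus V_-\oplus V_0$, project $W$ onto $V_+$, and show that the kernel of this projection sits inside the radical $V_0$, so rank--nullity gives $\dim W\le n_+ + n_0$. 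Both arguments are sound and of comparable length; the paper's version is marginally more economical since it only needs the definite subspace of the \emph{larger} sign and a transversality observation, whereas yours uses the full three-way $\varphi$-orthogonal decomposition. One small point in your favor: you explicitly flag that a vertex in an independent set cannot carry a loop, which is worth saying since the graphs $G_n$ in this paper are allowed to have loops; the paper's phrase ``no pairs of adjacent vertices in $S$'' covers this implicitly (a looped vertex is adjacent to itself) but does not call it out.
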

	\begin{proof}
		Let $S \subseteq V$ be one of the maximum independent sets. By $U$ denote the subspace of $\mathbb{C}^V$ spanned by vectors $\{ e_{v} \}_{v \in S}$. Since there are no pairs of adjacent vertices in $S$,
		\[
			\varphi(x, y) = 0 \quad \forall x, y \in U
		\]

		Assume that $n_+ \geq n_-$. From the expression (\ref{canonical_form}) it is clear that there is an $n_+$-dimensional subspace $W_+$ such that $\varphi(x, x) > 0$ for every non-zero vector $x \in W_+$. Subspaces $U$ and $W_+$ only have the zero vector in common, thus their sum is direct and we have
		\[
			\alpha(G) + n_+ = \dim(U + W_+) \leq n
		\]
		\[
			\alpha(G) \leq n - n_+ = \min(n_+, n_-) + n_0
		\]

		In the case $n_- \geq n_+$ instead of $W_+$ we consider an $n_-$-dimensional subspace $W_-$ such that $\varphi(x, x) < 0$ for every non-zero $x \in W_-$.
	\end{proof}

	\section{Connection between the two problems.}

	\textbf{The combinatorial problem.} With each finite set of natural numbers $D$ we may associate a family of circulant graphs $G(n, D)$. Each $G(n, D)$ has a set of vertices $V = \{0, \ldots, n - 1\}$, and two vertices $u$ and $v$ are adjacent if they are at some distance $d \in D$, meaning that there is a number $d \in D$ such that $u - v$ or $v - u$ is congruent to $d$ modulo $n$.
	\[
		(u, v) \in E \Leftrightarrow \exists d \in D\colon u - v \equiv d \Mod{n} \textrm{ or } v - u \equiv d \Mod{n}
	\]

	\begin{definition}
		By the circulant graph on $n$ vertices with the set of distances $D \subset \mathbb{N}$ we mean graph $G(n, D)$ defined above.
	\end{definition}

	Note that $G(n, D)$ may have loops when $n \leq \max(D)$.






	Roughly speaking, circulant graphs are homogeneous, or in other words, <<everywhere the same>>. This suggests that certain numerical properties such as the size of the maximum independent set $\alpha(G(n, D))$ might be <<proportional>> to the number of vertices.

	\begin{theorem}
		\label{alpha_limit}
		For a finite set $D \subset \mathbb{N}$
		\[
			\lim_{n \to \infty} \frac{\alpha(G(n, D))}{n} = \sup_{n} \frac{\alpha(G(n, D))}{n}
		\]
	\end{theorem}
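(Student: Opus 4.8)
The plan is to show that the sequence $a_n = \alpha(G_n)$ is \emph{superadditive}, i.e. $a_{m+n} \geq a_m + a_n$, and then invoke Fekete's lemma, which gives exactly $\lim_{n\to\infty} a_n/n = \sup_n a_n/n$ (the supremum possibly being $+\infty$ a priori, though here it is trivially at most $1$ since $\alpha(G_n) \leq n$). So the whole content is the superadditivity inequality.

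To prove superadditivity, I would take a maximum independent set $S_m \subseteq \{0, \ldots, m-1\}$ in $G_m$ and a maximum independent set $S_n \subseteq \{0, \ldots, n-1\}$ in $G_n$, and build an independent set in $G_{m+n}$ of size $|S_m| + |S_n|$. The natural candidate is $S = S_m \cup (S_n + m) = S_m \cup \{\, s + m : s \in S_n \,\}$, viewed inside $\{0, \ldots, m+n-1\}$. I need to check no two elements of $S$ are at a distance $d \in D$ modulo $m+n$. There are three types of pairs: both in $S_m$, both in $S_n + m$, and one in each. For two elements $u, v \in S_m$, we have $|u - v| \leq m - 1$, and... here is the subtlety: independence of $S_m$ in $G_m$ says $u - v \not\equiv \pm d \pmod m$, but I need $u - v \not\equiv \pm d \pmod{m+n}$. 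These are not the same condition unless I am careful, because a difference that is a multiple of $m$ plus $d$ could equal $d$ itself modulo $m+n$ only in restricted ranges.

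The clean way around this is to instead argue with a \emph{different} choice of $n$: rather than proving superadditivity for all pairs $m, n$ directly, I would first observe that $\alpha(G_{kn}) \geq k\,\alpha(G_n)$ by the blow-up/covering argument — if $S$ is independent in $G_n$ then its preimage under the natural quotient map $\mathbb{Z}/kn\mathbb{Z} \to \mathbb{Z}/n\mathbb{Z}$... no, that inflates distances too. Let me reconsider. The honest fix is this: choose the independent set $S_n$ in $G_n$ and note that when we place a copy of $S_m$ on $\{0,\ldots,m-1\}$ and a copy of $S_n$ on $\{m, \ldots, m+n-1\}$, a cross-pair $u \in S_m$, $v = s+m$ has difference $v - u \in \{1, \ldots, m+n-1\}$ wrapping issues aside; the problem is genuinely that $D$-distances can straddle the join points. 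The robust approach, and the one I would actually write up, is to bound things by a \emph{slightly larger} modulus: show $\alpha(G_{m+n+c}) \geq \alpha(G_m) + \alpha(G_n)$ where $c = \max(D)$ acts as a "buffer zone" of $c$ unused vertices inserted between the two copies, so that no $D$-distance edge can bridge a copy to itself through the buffer or connect the two copies. Then the sequence $a_n$ is "superadditive up to a bounded additive error," and a standard variant of Fekete's lemma still yields convergence of $a_n/n$ to $\sup_n a_n / n$; alternatively one passes to the subsequence along multiples and cleans up.

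I expect the buffer-zone bookkeeping to be the main obstacle: one must verify carefully that with $c = \max D$ blank vertices separating the block $\{0,\ldots,m-1\}$ from the block $\{m+c, \ldots, m+c+n-1\}$ inside $\mathbb{Z}/(m+n+c)\mathbb{Z}$, every pair at a $D$-distance modulo $m+n+c$ lies entirely within one block, so that independence is inherited from $G_m$ and $G_n$ respectively — this requires checking the "wrap-around" distances at both seams and using $n, m \geq \max D$ (which is harmless since we take $n \to \infty$). Once superadditivity-with-buffer is in hand, the limit statement, including the identification of the limit with the supremum, follows from the Fekete-type lemma; I would either cite it or include the three-line $\liminf \geq \sup$ / $\limsup \leq \sup$ argument. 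Monotonicity considerations also show the $\sup$ is a genuine maximum-approaching quantity bounded by $1$, so there are no issues with the limit being infinite.
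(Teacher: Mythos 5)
Your overall strategy (a Fekete-type argument after establishing approximate superadditivity) is reasonable in spirit, and you correctly spotted that naive superadditivity $\alpha(G_{m+n}) \geq \alpha(G_m) + \alpha(G_n)$ fails. But the buffer you propose does not work: a single buffer of $c = \max(D)$ vertices placed between the two blocks in $\mathbb{Z}/(m+n+c)\mathbb{Z}$ guards only \emph{one} of the two seams. On the cyclic group there is a second seam, from the last position $m+c+n-1$ of block 2 back around to position $0$ of block 1, and nothing prevents a $D$-edge from bridging it. Concretely, take $D=\{1\}$, $m=n=4$, $c=1$, $S_m=\{0,2\}$, $S_n=\{1,3\}$; your construction produces $\{0,2,6,8\}\subset\mathbb{Z}/9\mathbb{Z}$, in which $0$ and $8$ are adjacent. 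The condition $m,n\geq\max D$ that you mention does not repair this. The honest version of your lemma is $\alpha(G_{m+n+2\max(D)}) \geq \alpha(G_m)+\alpha(G_n)$ with a $\max(D)$-sized buffer at \emph{each} seam; with that, a Fekete variant for ``superadditive with index shift'' does deliver the result, though the bookkeeping is heavier than you suggest.

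You also discard the blow-up along multiples too quickly. The preimage of an independent set $S\subseteq\mathbb{Z}/n\mathbb{Z}$ under the quotient $\mathbb{Z}/kn\mathbb{Z}\to\mathbb{Z}/n\mathbb{Z}$ — i.e.\ $S+\{0,n,\dots,(k-1)n\}$ — \emph{is} independent in $G_{kn}$: if $u-v\equiv\pm d\ (\mathrm{mod}\ kn)$ with $d\in D$, then reducing mod $n$ gives an edge in $G_n$, and if $u\equiv v\ (\mathrm{mod}\ n)$ then $u-v$ is a nonzero multiple of $n$, which cannot be $\equiv\pm d\ (\mathrm{mod}\ kn)$ when $0<d<n$. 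Distances are not ``inflated.'' This observation, giving $\alpha(G_{kn})\geq k\,\alpha(G_n)$, combined with the one-sided shift $\alpha(G_{n+m})\geq\alpha(G_n)$ for $m\geq\max(D)$ (which you almost wrote down — embedding a \emph{single} copy with a \emph{single} buffer avoids the two-seam problem entirely), is exactly what the paper uses, followed by a short $\varepsilon$-argument rather than an invocation of Fekete. That route is simpler and sidesteps both the wrap-around gap and the delicate ``Fekete with shift'' step in your plan.
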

	\begin{proof}
		Define
		\[
			\alpha_n = \frac{\alpha(G(n, D))}{n}
		\]

		If $S = \{s_1, \ldots, s_t\}$ is a maximum independent set of $G(n, D)$, then an independent set $S^{\prime}$ of $G_{kn}$ of size $kt$ can be obtained by repetition
		\[
			S^{\prime} = \{s_1, \ldots, s_t, s_1 + n, \ldots, s_t + n, \ldots, s_1 + (k - 1)n, \ldots, s_t + (k - 1)n \}
		\]
		Thus
		\begin{equation}
			\label{multiple}	
			\alpha_{kn} \geq \alpha_{n}
		\end{equation}

		Since the distance between two adjacent vertices is at most $\max(D)$, for any $m \geq \max(D)$
		\[
			S^{\prime} = \{s_1 + m, \ldots, s_t + m \}
		\]
		is an independent set of $G_{n + m}$ of size $t$, which implies
		\begin{equation}
			\label{greater}
			\alpha_{n + m} \geq	\frac{n}{n + m} \alpha_n
		\end{equation}

		For $m > n + \max(D)$ choose $0 \leq i < n$ such that $m - \max(D) - i$ is divisible by $n$, by inequalities (\ref{multiple}) and (\ref{greater})
		\begin{equation}
			\label{m_n}
			\alpha_m \geq \frac{m - \max(D) - i}{m} \alpha_{m - \max(D) - i} \geq \left(1 - \frac{\max(D) + n}{m} \right)\alpha_n
		\end{equation}

		Define
		\[
			\alpha^{\prime} = \sup_{n} \frac{\alpha(G(n, D))}{n}
		\]
		For every $\varepsilon \in (0; 1)$ there is a number $n$ such that $\alpha_n \geq (1 - \varepsilon)\alpha^{\prime}$, and for all sufficiently large $m$ inequality (\ref{m_n}) gives us
		\[
			\alpha_m \geq (1 - \varepsilon) \alpha_n \geq (1 - \varepsilon)^2 \alpha^{\prime}
		\]
		\[
			(1 - \varepsilon)^2 \alpha^{\prime} \leq \alpha_m \leq \alpha^{\prime}
		\]
		Since the choice of $\varepsilon$ here was arbitrary, we arrive at
		\[
			\lim_{m \to \infty} \alpha_m = \alpha^{\prime}
		\]
	\end{proof}

	This coefficient of proportionality is of interest to us.
	\begin{definition}
		For a finite $D \subset \mathbb{N}$
		\[
			\alpha(D) = \lim_{n \to \infty} \frac{\alpha(G(n, D))}{n}
		\]
	\end{definition}

	\medskip

	\textbf{The analytic problem.} Fix a finite set of natural numbers $D$. Let $f$ be a non-zero complex polynomial of the form
	\begin{equation}
		\label{f_z}
		f(z) = \sum_{d \in D} c_d z^d
	\end{equation}
	When $z$ lies on the unit circle, the real part of $f$ could be expressed as
	\[
		2 \operatorname{Re} f(z) = f(z) + \overline{f(z)} = \sum_{d \in D} c_d z^d + \overline{c_d} z^{-d}
	\]
	Points on the unit circle with $\operatorname{Re} f(z) = 0$ are the roots of the non-zero polynomial
	\[
		z^{\max(D)} \sum_{d \in D} c_d z^d + \overline{c_d} z^{-d}
	\]
	Thus, the equality $\operatorname{Re}f(z) = 0$ can occur only finitely many times when $|z| = 1$.

	Since in the sum (\ref{f_z}) there is no constant term,
	\[
		\int_{0}^{2\pi} f(e^{i \theta}) d\theta = 0
	\]
	So there has to be at least one point on the unit circle where $\operatorname{Re} f > 0$ and at least one point where $\operatorname{Re} f < 0$.

	If on the unit circle we remove the points where $\operatorname{Re} f = 0$, we split the unit circle into a finite number of arcs on which $\operatorname{Re} f$ is either positive or negative. By $\rho_+(f)$ denote the total length of the arcs where $\operatorname{Re} f > 0$ divided by $2\pi$, and by $\rho_-(f)$ denote the total length of the arcs where $\operatorname{Re} f < 0$ divided by $2\pi$.

	We have already shown that $\rho_-(f)$ and $\rho_+(f)$ are greater than $0$. But is there any better lower bound for a fixed set $D$? Or could these values become arbitrarily small?

	Note that
	\[
		\rho_-(f) = \rho_+(-f) \textrm{ and } \rho_+(f) = \rho_-(-f)
	\]
	So we may focus our attention on $\rho_-(f)$. Since for any $\lambda > 0$
	\[
		\rho_-(f) = \rho_-(\lambda f),
	\]
	we may, in addition, assume that in the sum (\ref{f_z})
	\begin{equation}
		\label{sphere}
		\sum_{d \in D} |c_d|^2 = 1
	\end{equation}
	Denote the set of all complex polynomials of the form (\ref{f_z}) that satisfy (\ref{sphere}) by $F_D$.

	The set $F_D$ could be regarded as the sphere in the $2|D|$-dimensional space and thus is compact. We regard $\rho_-(f)$ as a function of coefficients $\{ c_d \}_{d \in D}$, it is continuous at every non-zero point. Since function $\rho_-(f)$ is continuous on the compact set $F_D$, its minimum is going to be attained at some point(for a more detailed explanation see Lemma \ref{rho_continuous}). Because of that, the following definition makes sense
	\begin{definition}
		For a finite $D \subset \mathbb{N}$ by $\rho(D)$ denote the minimum value that $\min(\rho_+(f), \rho_-(f))$ could take when $f$ is a non-zero complex polynomial of the form
		\[
			f(z) = \sum_{d \in D} c_d z^d
		\]
	\end{definition}

	\medskip

	\textbf{The connection.} In the following theorem we establish a connection between the two problems discussed above.

	\begin{theorem}
		\label{connect}
		For a finite set $D \subset \mathbb{N}$
		\[
			\alpha(D) \leq \rho(D)
		\]
	\end{theorem}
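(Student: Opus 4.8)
The plan is to link the combinatorial quantity $\alpha(G_n)$ to the analytic quantity $\rho_-(f)$ by building, for each $f \in F_D$, a family of hermitian forms associated with the circulant graphs $G_n$ whose signatures are controlled by $\rho_+(f)$ and $\rho_-(f)$. Then Proposition \ref{alpha_sgn} will turn the signature bound into a bound on $\alpha(G_n)$, and dividing by $n$ and letting $n \to \infty$ will yield $\alpha(D) \le \rho_-(f)$; taking the infimum over $f$ gives $\alpha(D) \le \rho(D)$.

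Concretely, fix $f(z) = \sum_{d \in D} c_d z^d$ and consider the real trigonometric polynomial $g(\theta) = 2\operatorname{Re} f(e^{i\theta}) = \sum_{d \in D} c_d e^{id\theta} + \overline{c_d} e^{-id\theta}$. The idea is to sample $g$ at the $n$-th roots of unity $\theta_k = 2\pi k/n$ and use these samples to weight a circulant hermitian form. More precisely, I would look at the $n \times n$ circulant matrix $A_n$ whose entries encode the Fourier coefficients of $\operatorname{sgn}(g)$ (or of $g$ itself): since $g$ has spectrum in $\pm D$, the matrix with $(A_n)_{uv}$ depending only on $u - v \bmod n$ and supported on differences lying in $\pm D \pmod n$ is exactly a hermitian form associated with $G_n$. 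A circulant matrix is diagonalized by the discrete Fourier transform, so its eigenvalues are the values of the associated symbol at the $n$-th roots of unity. By choosing the symbol to be (a smoothed version of) $\operatorname{sgn} g$, the number of positive eigenvalues of $A_n$ is approximately $n \cdot \rho_+(f)$ and the number of negative eigenvalues is approximately $n \cdot \rho_-(f)$, with an error term of lower order coming from the finitely many sign changes of $g$ and from the discrepancy between sampling and integrating. The subtlety is that $\operatorname{sgn} g$ is not a trigonometric polynomial with spectrum in $D$, so one must instead take a trigonometric polynomial $h$ of bounded degree that approximates $\operatorname{sgn} g$ in $L^1$ and whose spectrum, after reduction mod $n$, lands in $\pm D$; for $n$ large relative to the degree of $h$ this reduction is injective on the relevant frequencies, so $A_n$ stays associated with $G_n$.

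Then Proposition \ref{alpha_sgn} gives $\alpha(G_n) \le \min(n_+(A_n), n_-(A_n)) + n_0(A_n)$. Since $n_+ + n_- + n_0 = n$, we have $\min(n_+, n_-) + n_0 = n - \max(n_+, n_-)$, and because $\max(n_+, n_-) \ge n \cdot \max(\rho_+, \rho_-) - o(n) = n(1 - \min(\rho_+,\rho_-)) - o(n)$, we obtain $\alpha(G_n)/n \le \min(\rho_+(f), \rho_-(f)) + o(1)$. Letting $n \to \infty$ and invoking Theorem \ref{alpha_limit} yields $\alpha(D) \le \min(\rho_+(f), \rho_-(f))$ for every such $f$, hence $\alpha(D) \le \rho(D)$.

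The main obstacle I anticipate is the quantitative control of the signature of $A_n$: one needs the eigenvalue count of the circulant matrix — i.e.\ the number of $n$-th roots of unity at which the chosen symbol is positive versus negative — to track $\rho_\pm(f)$ up to $o(n)$ error uniformly, which requires (i) an $L^1$-approximation of $\operatorname{sgn} g$ by a trigonometric polynomial $h$ of degree independent of $n$ with the right spectral support, (ii) an equidistribution/Riemann-sum estimate for the roots of unity that handles the boundary arcs near the zeros of $g$, and (iii) the injectivity of the reduction-mod-$n$ map on the spectrum of $h$ for large $n$. Making these three estimates fit together — especially ensuring the approximating polynomial's frequencies stay inside $\pm D$ exactly rather than merely approximately — is where the real work lies; the rest is bookkeeping with Propositions \ref{sgn_eigen} and \ref{alpha_sgn} and the limit in Theorem \ref{alpha_limit}.
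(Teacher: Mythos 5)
Your overall skeleton --- build a circulant hermitian matrix associated with $G_n$, diagonalize it by the discrete Fourier transform, apply Proposition \ref{alpha_sgn}, divide by $n$, and pass to the limit via Theorem \ref{alpha_limit} --- is precisely the paper's strategy. But there is a genuine misstep in the middle: you commit to taking the circulant symbol to be (a smoothed version of) $\operatorname{sgn} g$ rather than $g$ itself, and this derails into an approximation problem that is both unnecessary and, as posed, unsolvable. You ask for a trigonometric polynomial $h$ that approximates $\operatorname{sgn} g$ in $L^1$ and whose spectrum reduces mod $n$ into $\pm D$. Since $D$ is a fixed finite set, the polynomials with spectrum in $\pm D$ form a fixed finite-dimensional space, and there is no reason the $L^1$-distance from $\operatorname{sgn} g$ to that space is small, let alone controllable; the ``main obstacle'' you flag in point (i) is not bookkeeping but a wall.

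The fix is the option you mention parenthetically and then abandon: use $g = 2\operatorname{Re} f$ itself, i.e.\ place the coefficients $c_d$ and $\overline{c_d}$ directly into the circulant matrix $A_n$. Since $g$ genuinely has Fourier spectrum in $\pm D$, this $A_n$ is a hermitian form associated with $G_n$ with no approximation step, and the DFT shows its eigenvalues are exactly $2\operatorname{Re} f(\omega_n^j)$, $j = 0,\ldots,n-1$. You do not need the symbol to \emph{be} $\operatorname{sgn} g$; you only need the \emph{signs} of the eigenvalues, and $g(2\pi j/n)$ already carries the sign of $g$ at that root of unity. Because $\operatorname{Re} f$ is a nonzero trigonometric polynomial it vanishes at only finitely many points of $\mathbb{T}$, so the fractions of $n$-th roots of unity where it is positive, negative, or zero converge to $\rho_+(f)$, $\rho_-(f)$, and $0$ respectively --- that is the entire equidistribution input, and your obstacles (ii) and (iii) evaporate along with (i). With that substitution the rest of your argument closes correctly.
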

	\begin{proof}
		We are going to show that $\alpha(D) \leq \min(\rho_+(f), \rho_-(f))$ for every non-zero complex polynomial
		\[
			f(z) = \sum_{d \in D} c_d z^d
		\]
		For $i \notin D$ we define $c_i = 0$.

		When $n \geq 2\max(D) + 1$, we can construct the following $n \times n$ circulant hermitian matrix
		\begin{equation*}
			A = (a_{ij}) = \begin{bmatrix}
				0 & c_1 & \ldots & c_{\max(D)} & \ldots & \overline{c_{\max(D)}} & \ldots & \overline{c_1} \\
				\overline{c_1} & 0 & c_1 & \ldots & c_{\max(D)} & \ldots & \overline{c_{\max(D)}} & \ldots \\
				\vdots & \vdots & \vdots & \vdots & \vdots & \vdots & \vdots & \vdots \\
				\vdots & \vdots & \vdots & \vdots & \vdots & \vdots & \vdots & \vdots \\
				\vdots & \vdots & \vdots & \vdots & \vdots & \vdots & \vdots & \vdots \\
				\vdots & \vdots & \vdots & \vdots & \vdots & \vdots & \vdots & \vdots \\
				\vdots & \vdots & \vdots & \vdots & \vdots & \vdots & \vdots & \vdots \\
				c_1 & \ldots & c_{\max(D)} & \ldots & \overline{c_{\max(D)}} & \ldots & \overline{c_1} & 0
			\end{bmatrix}
		\end{equation*}
		The first row of $A$ begins with $0$ followed by the coefficients of $f$ and ends with the conjugates of the coefficients of $f$ in the reverse order, between $c_{\max(D)}$ and $\overline{c_{\max(D)}}$ we have zeroes. Each next row is a circular shift to the right of the previous one.

		By $\omega_n$ denote $e^{\frac{2\pi i}{n}}$. For each $j \in \{0, \ldots, n - 1\}$ define
		\[
			v_j = \begin{bmatrix}
				1 \\ \omega_n^{j} \\ \omega_n^{2j} \\ \vdots \\ \omega_n^{(n - 1)j}
			\end{bmatrix}
		\]
		Note that each $v_j$ is an eigenvector of our circulant matrix $A$ and that vectors $v_j$ form a basis. From this we may conclude that the eigenvalues of $A$ are
		\[
			\lambda_j = \sum_{k = 0}^{n - 1} a_{0k} \omega_n^{jk} = \sum_{d \in D} c_d (\omega_n^j)^d + \sum_{d \in D} \overline{c_d}(\omega_n^j)^{n - d} = \sum_{d \in D} c_d (\omega_n^j)^d + \overline{c_d (\omega_n^j)^d}
		\]
		So the eigenvalues of $A$ are
		\begin{equation}
			\label{list_eigen}
			2\operatorname{Re} f(1), 2\operatorname{Re} f(\omega_n), \cdots, 2\operatorname{Re} f(\omega_n^j), \cdots, 2\operatorname{Re} f(\omega_n^{n - 1})
		\end{equation}

		The hermitian form defined by matrix $A$ is associated with the circulant graph $G(n, D)$ on $n$ vertices with the set of distances $D$. By Proposition \ref{alpha_sgn}
		\begin{equation}
			\label{step_1}
			\alpha(G(n, D)) \leq \min(n_+, n_-) + n_0
		\end{equation}
		By Proposition \ref{sgn_eigen} $n_+, n_-$ and $n_0$ are the numbers of positive, negative and zero eigenvalues of $A$, respectively. Divide both sides of \ref{step_1} by $n$
		\begin{equation}
			\label{step_2}
			\frac{\alpha(G(n, D))}{n} \leq \min\left(\frac{n_+}{n}, \frac{n_-}{n}\right) + \frac{n_0}{n}
		\end{equation}

		Since $(7)$ are the eigenvalues of $A$,
		\[
			\lim_{n \to \infty} \frac{n_+}{n} = \rho_+(f) \quad \lim_{n \to \infty} \frac{n_-}{n} = \rho_-(f) \quad \lim_{n \to \infty} \frac{n_0}{n} = 0
		\]
		The last limit is zero, because $\operatorname{Re} f$ is equal to $0$ only for a finite number of points on the unit circle.

		Theorem \ref{alpha_limit} states that
		\[
			\lim_{n \to \infty} \frac{\alpha(G(n, D))}{n} = \sup_{n} \frac{\alpha(G(n, D))}{n} = \alpha(D)
		\]

		By taking limit in inequality \ref{step_2} we arrive at
		\[
			\alpha(D) \leq \min(\rho_+(f), \rho_-(f))
		\]
	\end{proof}

	\begin{remark}
		If instead of $f(z)$ we consider the function $if(z)$ we get a result on the imaginary part $\operatorname{Im} f(z)$. Furthermore, consideration of $c f(z)$ for a non-zero complex number $c$ can lead to any other line passing through $0$ in the complex plane. So there is nothing special about the real or the imaginary axes.
	\end{remark}

	\medskip

	\textbf{Corollaries.} In this section we will show some lower bounds on the combinatorial problem, which by Theorem \ref{connect} would translate to lower bounds on the analytic problem.

	\begin{proposition}
		\label{consecutive}
		When $D$ is the set of consecutive integers from $a$ to $b$,
		\[
			\alpha(D) = \alpha(\{a, \ldots, b\}) \geq \frac{a}{a + b} = \frac{1}{1 + \frac{b}{a}}
		\]
	\end{proposition}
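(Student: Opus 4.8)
The plan is to reduce the statement to exhibiting, for one conveniently chosen $n$, a large independent set in $G_n$, and then to invoke Theorem \ref{alpha_limit}. Since that theorem gives $\alpha(D) = \sup_n \frac{\alpha(G_n)}{n}$, it suffices to produce a single $n$ together with an independent set of $G_n$ of size at least $\frac{a}{a+b}\,n$; the natural (and simplest) choice is $n = a + b$.

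First I would record what adjacency in $G_{a+b}$ looks like. Since $a \geq 1$, we have $n = a+b > b = \max(D)$, so $G_n$ has no loops. Modulo $n = a+b$, the set $\{\, d \bmod n : d \in D\,\}$ of distances and the set $\{\, (-d)\bmod n : d \in D\,\}$ of their negatives both equal $\{a, a+1, \ldots, b\}$: indeed $0 < d < n$ gives $(-d)\bmod n = n - d$, and $d \mapsto n - d$ is a bijection of $\{a,\ldots,b\}$ onto itself. Hence two vertices $u,v$ are adjacent in $G_{a+b}$ if and only if $(u-v) \bmod (a+b) \in \{a, \ldots, b\}$.

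Next I would check that $S = \{0, 1, \ldots, a-1\}$ is independent. For distinct $u, v \in S$ the residue $r = (u-v)\bmod(a+b)$ equals $u-v$ when $u > v$, so $1 \leq r \leq a-1 < a$, and it equals $(a+b)-(v-u)$ when $u < v$, so $r \geq (a+b)-(a-1) = b+1 > b$; either way $r \notin \{a,\ldots,b\}$, so $u$ and $v$ are not adjacent. Therefore $\alpha(G_{a+b}) \geq |S| = a$, and
\[
    \alpha(D) \;\geq\; \frac{\alpha(G_{a+b})}{a+b} \;\geq\; \frac{a}{a+b} \;=\; \frac{1}{1+\frac{b}{a}}.
\]
There is no real obstacle here; the only step deserving a second glance is the observation that taking $n = a+b$ makes both the set of distances and the set of their negatives reduce to the single block $\{a,\ldots,b\}$ modulo $n$, which is precisely what leaves the initial run of $a$ vertices edge-free.
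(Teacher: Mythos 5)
Your proof is correct and follows exactly the same route as the paper: take $n = a+b$, exhibit $S = \{0,\ldots,a-1\}$ as an independent set of $G_{a+b}$, and invoke Theorem \ref{alpha_limit}. You simply spell out the modular-arithmetic verification of independence that the paper leaves to the reader.
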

	\begin{proof}
		Note that
		\[
			S = \{0, \ldots, a - 1 \}
		\]
		forms an independent set of the graph $G(a + b, D)$. By Theorem \ref{alpha_limit}
		\[
			\alpha(D) \geq \alpha_{a + b} \geq \frac{|S|}{a + b} = \frac{a}{a + b}
		\]
	\end{proof}

	\begin{proposition}
		\label{pair}
		For two coprime natural numbers $a$ and $b$
		\[
			\alpha(\{ a, b \}) = \frac{\lfloor \frac{a + b}{2} \rfloor}{a + b}
		\]
	\end{proposition}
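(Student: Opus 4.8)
The plan is to prove the two inequalities separately, using Theorem \ref{alpha_limit} to identify $\alpha(\{a,b\})$ with $\sup_n \alpha(G_n)/n$.

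For the lower bound I would look at $n = a+b$. There $b \equiv -a \Mod{a+b}$, so the distance-$b$ edges of $G_{a+b}$ coincide with the distance-$a$ edges, and $G_{a+b}$ is the circulant graph with connection set $\{\pm a\}$; since $\gcd(a, a+b) = \gcd(a,b) = 1$, this graph is a single cycle on $a+b$ vertices, whence $\alpha(G_{a+b}) = \lfloor \frac{a+b}{2}\rfloor$. Theorem \ref{alpha_limit} then gives $\alpha(\{a,b\}) \geq \frac{\alpha(G_{a+b})}{a+b} = \frac{\lfloor\frac{a+b}{2}\rfloor}{a+b}$. This is the only place coprimality is used: without it $G_{a+b}$ would split into several shorter cycles and carry a proportionally smaller independent set.

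For the upper bound I would show $\alpha(G_n) \leq \frac{\lfloor\frac{a+b}{2}\rfloor}{a+b}\, n$ for every $n$ by an averaging argument over the $n$ rotations of one cleverly chosen closed walk. We may assume $n \nmid a$ and $n \nmid b$, since otherwise $G_n$ has a loop at every vertex and $\alpha(G_n) = 0$. Take the closed walk in $G_n$ consisting of $b$ steps of $+a$ followed by $a$ steps of $-b$: setting $p_j = ja$ for $0 \le j \le b$ and $p_{b+k} = ba - kb$ for $0 \le k \le a$, we get $p_{a+b} = ba - ab = 0 = p_0$, and each consecutive pair $p_i, p_{i+1}$ differs by $\pm a$ or $\pm b$, hence is an edge of $G_n$. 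Let $I$ be a maximum independent set of $G_n$. For $t \in \mathbb{Z}/n$ put $S_t = \{\, i \in \{0, \ldots, a+b-1\} : p_i + t \in I\,\}$; since translation by $t$ is an automorphism of $G_n$, no two cyclically consecutive indices lie in $S_t$, so $S_t$ is an independent set of a cycle of length $a+b$ and $|S_t| \le \lfloor\frac{a+b}{2}\rfloor$.

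Finally I would double count $\sum_{t \in \mathbb{Z}/n} |S_t|$: summing over $t$ first, for each fixed $i$ the element $p_i + t$ runs over all of $\mathbb{Z}/n$ as $t$ does, so it lies in $I$ exactly $|I|$ times and the total is $(a+b)|I|$; summing over $i$ first, the total is at most $n\lfloor\frac{a+b}{2}\rfloor$. Hence $\alpha(G_n) = |I| \le \frac{\lfloor\frac{a+b}{2}\rfloor}{a+b}\, n$, so $\alpha(\{a,b\}) = \sup_n \frac{\alpha(G_n)}{n} \le \frac{\lfloor\frac{a+b}{2}\rfloor}{a+b}$, which together with the lower bound gives equality. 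I do not expect a real obstacle; the only nonroutine ingredient is spotting the closed walk of length exactly $a+b$ (the ``$b$ forward steps by $a$, then $a$ backward steps by $b$'' trick) that makes the translation average match the cycle bound $\lfloor\frac{a+b}{2}\rfloor$.
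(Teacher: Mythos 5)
Your proposal is correct and takes essentially the same approach as the paper: the lower bound from $\alpha(G_{a+b})$ being a cycle, and the upper bound by averaging independent-set intersections over all $n$ rotations of a length-$(a+b)$ cyclic structure. The only (minor) difference is that the paper restricts to $n > ab$ so that the cycle $0 \to a \to \cdots \to ba \to \cdots \to b \to 0$ has distinct vertices and is literally a subgraph, whereas your closed-walk viewpoint (a graph homomorphism from $C_{a+b}$ into $G_n$) dispenses with that restriction and works for all $n$ with $n \nmid a$, $n \nmid b$ -- a slight streamlining of the same averaging argument.
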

	\begin{proof}
		Let $D = \{a, b\}$. The graph $G(a + b, D)$ is a cycle. Indeed,
		\[
			a \equiv -b \Mod{a + b},
		\]
		meaning that distances $a$ and $b$ are equivalent in this case. Since the distance $a$ is coprime to the number of vertices $a + b$, edges that correspond to the distance $a$ form a cycle on $a + b$ vertices. Size of the maximum independent set of a cycle $G(a + b, D)$ is
		\[
			\alpha(G(a + b, D)) = \left\lfloor \frac{a + b}{2} \right\rfloor
		\]
		From this by theorem \ref{alpha_limit} we conclude
		\begin{equation}
			\label{dir_1}
			\alpha(D) \geq \frac{\alpha(G(a + b, D))}{a + b} = \frac{\lfloor \frac{a + b}{2} \rfloor}{a + b}
		\end{equation}

		Assume that $n > ab$. Let $S$ be a maximum independent subset of $G(n, D)$. Because the numbers $a$ and $b$ are coprime, the following two sets $A$ and $B$ are disjoint
		\[
			A = \{ a, 2a, \ldots, (b - 1)a \}
		\]
		\[
			B = \{ b, 2b, \ldots, (a - 1)b \}
		\]
		So we can identify a subgraph of $G(n, D)$ that is a cycle on $a + b$ vertices
		\begin{equation}
			\label{cycle}
			0 \rightarrow a \rightarrow \ldots \rightarrow (b - 1) a \rightarrow ab \rightarrow (a - 1) b \rightarrow \ldots \rightarrow b \rightarrow 0
		\end{equation}
		Denote its set of vertices by $C_0$. Right circular shifts of $C_0$ will be denoted by $C_1, \ldots, C_{n - 1}$. In simple terms, $C_i$ corresponds to the cycle similar to the one described in (\ref{cycle}), but we choose $i$ as the starting point. Each $S \cap C_i$ is an independent set of the cycle on $a + b$ vertices corresponding to $C_i$, thus
		\[
			|S \cap C_i| \leq \left\lfloor \frac{a + b}{2} \right\rfloor
		\]
		Each vertex of $G(n, D)$ appears $a + b$ times among the sets $C_i$, thus
		\[
			(a + b) |S| = \sum_{i} |S \cap C_i| \leq n \left\lfloor \frac{a + b}{2} \right\rfloor,
		\]
		which implies that
		\begin{equation}
			\label{dir_2}
			\frac{\alpha(G(n, D))}{n} \leq \frac{\lfloor \frac{a + b}{2} \rfloor}{a + b}
		\end{equation}
		holds for all $n > ab$.

		From inequalities (\ref{dir_1}) and (\ref{dir_2}) we derive
		\[
			\alpha(D) = \frac{\lfloor \frac{a + b}{2} \rfloor}{a + b}
		\]
	\end{proof}

	\begin{proposition}
		\label{union}
		For two finite subsets $D_1, D_2 \subset \mathbb{N}$
		\[
			\alpha(D_1 \cup D_2) \geq \alpha(D_1) \alpha(D_2).
		\]
	\end{proposition}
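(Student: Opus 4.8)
The plan is to realise a large independent set for the circulant graph attached to $D_1 \cup D_2$ by a Chinese remainder product of independent sets for $D_1$ and for $D_2$, and then pass to the limit via Theorem \ref{alpha_limit}. For a finite $D \subset \mathbb{N}$ write $G_m^{D}$ for the circulant graph on $\{0,\ldots,m-1\}$ with distance set $D$; we may assume $D_1, D_2 \neq \emptyset$, the empty case being trivial since $\alpha(\emptyset)=1$. Pick coprime moduli $n_1, n_2$ with $n_i > \max(D_i)$ — this guarantees every $d \in D_i$ satisfies $0 < d < n_i$, so $G_{n_i}^{D_i}$ is loop-free — and let $S_i \subseteq \mathbb{Z}/n_i\mathbb{Z}$ be a maximum independent set of $G_{n_i}^{D_i}$. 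Set $n = n_1 n_2$ and
\[
	S = \{\, x \in \mathbb{Z}/n\mathbb{Z} : x \bmod n_1 \in S_1 \text{ and } x \bmod n_2 \in S_2 \,\}.
\]
Since $\gcd(n_1,n_2)=1$, the map $x \mapsto (x \bmod n_1,\, x \bmod n_2)$ is a bijection $\mathbb{Z}/n\mathbb{Z} \to \mathbb{Z}/n_1\mathbb{Z} \times \mathbb{Z}/n_2\mathbb{Z}$ by the Chinese remainder theorem, so $|S| = |S_1|\,|S_2|$.

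The next step is to check that $S$ is independent in $G_n^{D_1 \cup D_2}$. If not, there are distinct $x, y \in S$ with $x - y \equiv \pm d \pmod{n}$ for some $d \in D_1 \cup D_2$; say $d \in D_1$ (if instead $d \in D_2$, swap the roles of the two indices below). Reducing modulo $n_1$ gives $(x \bmod n_1) - (y \bmod n_1) \equiv \pm d \pmod{n_1}$. The two residues cannot be equal, since that would force $n_1 \mid d$, impossible as $0 < d < n_1$; hence they are two distinct adjacent vertices of $G_{n_1}^{D_1}$ lying in $S_1$, contradicting independence of $S_1$. Thus $S$ is independent, and, $S_i$ being a maximum independent set,
\[
	\frac{\alpha(G_n^{D_1 \cup D_2})}{n} \;\ge\; \frac{|S|}{n} \;=\; \frac{|S_1|}{n_1}\cdot\frac{|S_2|}{n_2} \;=\; \frac{\alpha(G_{n_1}^{D_1})}{n_1}\cdot\frac{\alpha(G_{n_2}^{D_2})}{n_2}.
\]

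Finally I would run the limiting argument, where the only real subtlety is coordinating coprimality with near-optimality of the ratios. Given $\varepsilon > 0$, Theorem \ref{alpha_limit} applied to $D_1$ yields $n_1 > \max(D_1)$ with $\alpha(G_{n_1}^{D_1})/n_1 > \alpha(D_1) - \varepsilon$; the numbers $1 + k n_1$ $(k \ge 1)$ are all coprime to $n_1$ and tend to infinity, so Theorem \ref{alpha_limit} applied to $D_2$ lets us choose $n_2 = 1 + k n_1 > \max(D_2)$ with $\alpha(G_{n_2}^{D_2})/n_2 > \alpha(D_2) - \varepsilon$. Plugging this pair into the displayed inequality and using $\alpha(D_1 \cup D_2) = \sup_m \alpha(G_m^{D_1 \cup D_2})/m$ gives $\alpha(D_1 \cup D_2) > (\alpha(D_1) - \varepsilon)(\alpha(D_2) - \varepsilon)$, and letting $\varepsilon \to 0$ finishes. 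The main obstacle is precisely this coordination: the product construction forces $n_1, n_2$ coprime, whereas Theorem \ref{alpha_limit} only produces near-optimal ratios along \emph{a priori} unrelated sequences of moduli; it works because the limit there genuinely exists and therefore persists along the progression $1 + k n_1$. The loop-free normalization $n_i > \max(D_i)$ is the other point not to be skipped, since it is exactly what validates the mod-$n_i$ reduction in the independence check.
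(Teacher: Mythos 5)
Your proof is correct, but it takes a genuinely different route from the paper's. The paper works on a \emph{single} modulus $n$ for all three circulant graphs at once: it takes maximum independent sets $A_1, A_2 \subseteq \{0, \ldots, n-1\}$ for $G_n^{(1)}$ and $G_n^{(2)}$, averages $|A_1 \cap S^j(A_2)|$ over all $n$ cyclic shifts $S^j$ to find one shift with $|A_1 \cap S^j(A_2)| \geq |A_1||A_2|/n$, observes that this intersection is independent in $G_n^{D_1 \cup D_2}$, and so obtains $\alpha(G_n)/n \geq (\alpha(G_n^{(1)})/n)(\alpha(G_n^{(2)})/n)$ for \emph{every} $n$ before passing to the limit. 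You instead build the product independent set directly via the Chinese Remainder Theorem on coprime moduli $n_1, n_2$, at the cost of having to coordinate coprimality with near-optimality of the two ratios — the coordination you supply via the progression $1 + k n_1$ is exactly what's needed, and it works precisely because Theorem~\ref{alpha_limit} gives a true limit (not merely a supremum along some subsequence). The paper's shift-averaging argument is cleaner in that it sidesteps coprimality and loops entirely and yields a pointwise inequality in $n$; your CRT construction is more explicit about the structure of the large independent set and is a standard product trick, but it genuinely leans on the limit existing, whereas the paper only needs $\sup_n = \lim_n$ at the very last step.
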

	\begin{proof}
		By $G_n, G_n^{(1)}$ and $G_n^{(2)}$ denote the circulant graphs on $n$ vertices with the sets of distances $D_1 \cup D_2$, $D_1$ and $D_2$, respectively. Let $A_1$ and $A_2$ be maximum independent subsets of $G_n^{(1)}$ and $G_n^{(2)}$, respectively.

		Define a right circular shift by $1$ position as
		\[
			S(i) = \begin{cases}
				i + 1, & \textrm{if } i < n - 1 \\
				0, & \textrm{if } i = n - 1
			\end{cases}
		\]
		For a set of vertices $U = \{u_1, \ldots, u_k\} \subseteq V$ by $S(U)$ we mean the set $\{S(u_1), \ldots, S(u_k)\}$. Right circular shift can be applied several times
		\[
			S^k(U) = S(\ldots S(U) \ldots)
		\]

		Each vertex $v$ appears exactly $|A_2|$ times among the sets \[
			A_2, S(A_2), \ldots, S^{n - 1}(A_2),
		\]
		thus
		\[
			\sum_{j} |A_1 \cap S^{j}(A_2)| = |A_1| \cdot |A_2|
		\]
		Therefore there is a number $j$ such that
		\[
			|A_1 \cap S^{j}(A_2)| \geq \frac{|A_1| \cdot |A_2|}{n}
		\]

		No pair of vertices in the set $S^j(A_2)$ could be at some distance $d \in D_2$, and no pair of vertices in the set $A_1$ could be at some distance $d \in D_1$. So $A_1 \cap S^j(A_2)$ is an independent set of $G_n$ and
		\[
			\frac{\alpha(G_n)}{n} \geq \frac{|A_1 \cap S^j(A_2)|}{n} \geq \frac{|A_1| \cdot |A_2|}{n^2} = \frac{\alpha(G_n^{(1)})}{n}\frac{\alpha(G_n^{(2)})}{n}
		\]
		By taking limit we arrive at conclusion
		\[
			\alpha(D_1 \cup D_2) \geq \alpha(D_1) \alpha(D_2)
		\]
	\end{proof}

	For a continuous function $f\colon \mathbb{R} \to \mathbb{R}$ with period $P > 0$ we can consider the subsets $S_+$ and $S_-$ of $[0; P]$ where $f > 0$ and $f < 0$, respectively.

	\begin{definition}
		For a continuous periodic function $f\colon \mathbb{R} \to \mathbb{R}$ define
		\[
			\rho_+(f) = \frac{\lambda(S_+)}{T} \quad \rho_-(f) = \frac{\lambda(S_-)}{T},
		\]
		where $\lambda$ denotes the Lebesgue measure on the real line.
	\end{definition}

	Two values defined above do not depend on the choice of the period $P$, since
	\[
		\rho_+(f) = \lim_{A \to \infty} \frac{\lambda\{ x \in [-A; A] \mid f(x) > 0 \}}{2A}
	\]
	\[
		\rho_-(f) = \lim_{A \to \infty} \frac{\lambda\{ x \in [-A; A] \mid f(x) < 0 \}}{2A}
	\]
	Thus the above definition makes sense.

	By combining Propositions \ref{consecutive}, \ref{union} and Theorem \ref{connect} we get the following result

	\begin{theorem}
		\label{spectrum}
		Let $\Theta = \{\theta_1, \ldots, \theta_n\}$ be a set of positive rational numbers. Suppose $\Theta$ is contained in the union of the segments $[a_i; b_i]$ with $0 < a_i \leq b_i$
		\[
			\Theta \subset [a_1; b_1] \cup \ldots \cup [a_m; b_m]
		\]
		Let $F$ be a non-zero periodic function defined by
		\[
			F(x) = \sum_{\theta \in \Theta} a_{\theta}\cos(2\pi \theta x) + b_{\theta}\sin(2\pi \theta x) \textrm{, where } a_{\theta}, b_{\theta} \in \mathbb{R}
		\]
		Then both $\rho_+(F)$ and $\rho_-(F)$ can be bounded from below as follows
		\[
			\min(\rho_+(F), \rho_-(F)) \geq \frac{1}{1 + \frac{b_1}{a_1}} \ldots \frac{1}{1 + \frac{b_n}{a_n}}
		\]
	\end{theorem}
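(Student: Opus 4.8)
The plan is to rewrite $F$ as twice the real part of a polynomial of the form (\ref{f_z}) evaluated on the unit circle, invoke Theorem \ref{connect}, and then estimate $\alpha(D)$ using Propositions \ref{consecutive} and \ref{union}.

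\textbf{Reduction to the polynomial case.} Since $\Theta$ is finite and consists of positive rationals, choose $q \in \mathbb{N}$ with $q\theta \in \mathbb{N}$ for all $\theta \in \Theta$. Put $z = e^{i\phi}$ with $\phi = 2\pi x/q$, so that $2\pi\theta x = (q\theta)\phi$ and
\[
	F(x) = \sum_{\theta \in \Theta}\bigl(a_\theta\cos((q\theta)\phi) + b_\theta\sin((q\theta)\phi)\bigr) = 2\operatorname{Re} f(z), \qquad f(z) = \sum_{\theta \in \Theta}\frac{a_\theta - ib_\theta}{2}\,z^{q\theta}.
\]
Here $f$ is a non-zero polynomial of the form (\ref{f_z}) with distance set $D = \{q\theta : \theta \in \Theta\} \subset \mathbb{N}$ (non-zero because $F \neq 0$ forces some $(a_\theta, b_\theta) \neq (0,0)$). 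As $x$ runs over one period $[0, q]$ of $F$, the point $z$ runs once around the unit circle, normalized arc length pulls back to normalized Lebesgue measure on $[0, q]$, and $\operatorname{Re} f(z)$ has the same sign as $F(x)$; hence $\rho_\pm(F) = \rho_\pm(f)$. Theorem \ref{connect} then gives
\[
	\min(\rho_+(F), \rho_-(F)) = \min(\rho_+(f), \rho_-(f)) \ \ge\ \rho(D)\ \ge\ \alpha(D),
\]
so everything reduces to bounding $\alpha(D)$ from below.

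\textbf{Estimating $\alpha(D)$.} First I would record the monotonicity: if $D \subseteq D'$ are finite subsets of $\mathbb{N}$, then $G_n$ with distances $D$ is a spanning subgraph of $G_n$ with distances $D'$, so $\alpha(G_n^D) \ge \alpha(G_n^{D'})$; dividing by $n$ and letting $n \to \infty$ gives $\alpha(D) \ge \alpha(D')$. For each segment set $a_i' = \lceil qa_i \rceil$ and $b_i' = \lfloor qb_i \rfloor$. If $[a_i; b_i]$ contains a point $\theta$ of $\Theta$, then $q\theta$ is an integer in $[qa_i; qb_i]$, so for each such $i$ we have $a_i' \le b_i'$, and moreover every element of $D$ lies in $\{a_i', \ldots, b_i'\}$ for one of these $i$; thus $D \subseteq \bigcup_i \{a_i', \ldots, b_i'\}$, the union taken over the segments meeting $\Theta$. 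Combining the monotonicity with Proposition \ref{union} (applied repeatedly) and then Proposition \ref{consecutive},
\[
	\alpha(D)\ \ge\ \prod_i \alpha(\{a_i', \ldots, b_i'\})\ \ge\ \prod_i \frac{a_i'}{a_i' + b_i'}.
\]
Since $a_i' \ge qa_i$, $b_i' \le qb_i$, and $\frac{a}{a+b}$ is increasing in $a$ and decreasing in $b$, each factor is at least $\frac{qa_i}{qa_i + qb_i} = \frac{1}{1 + b_i/a_i}$; and since every factor is at most $1$, dropping the segments that miss $\Theta$ only increases the product, so $\alpha(D) \ge \prod_{i=1}^{m}\frac{1}{1 + b_i/a_i}$. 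Chaining this with the inequality from the previous step completes the argument.

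\textbf{Where the work is.} The trigonometric rewriting and the monotonicity of $\frac{a}{a+b}$ are routine. The point deserving care is the measure-preserving identification $\rho_\pm(F) = \rho_\pm(f)$ — checking that one full period of $F$ corresponds to exactly one traversal of the unit circle and that the set where $\operatorname{Re} f = 0$ is null (already handled in the preliminaries) — together with the passage from the possibly irrational endpoints $a_i, b_i$ to the integers $a_i', b_i'$, which is precisely why ceilings and floors appear and why the resulting bound is no worse than the one claimed. Neither of these is a genuine obstacle.
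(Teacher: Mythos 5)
Your proof is correct and takes essentially the same route as the paper's: dilate so the frequencies become integers, rewrite $F$ as $\operatorname{Re}f$ on the unit circle, invoke Theorem \ref{connect}, and bound $\alpha(D)$ via monotonicity together with Propositions \ref{consecutive} and \ref{union}. The only difference is cosmetic: you make explicit the monotonicity step ($D \subseteq D' \Rightarrow \alpha(D) \geq \alpha(D')$) and the ceiling/floor bookkeeping after dilation, both of which the paper's proof uses but states more tersely.
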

	\begin{proof}
		Note that the values $\rho_+(F)$ and $\rho_-(F)$ are invariant under dilation of $F$. In other words, $F(x)$ can be replaced with $F(cx)$ for any non-zero $c$, so we may assume that $\theta_i$ are natural numbers.

		Define $c_{\theta}$ as $a_{\theta} - i b_{\theta}$. Note that
		\begin{equation}
			\label{F_Re}
			F(x) = \operatorname{Re} \sum_{\theta \in \Theta} (a_{\theta} - i b_{\theta}) (\cos(2\pi \theta x) + i \sin(2\pi \theta x))
		\end{equation}
		Let $f$ be the following non-zero polynomial
		\[
			f(z) = \sum_{\theta \in \Theta} c_{\theta} z^{\theta}
		\]
		Then (\ref{F_Re}) could be rewritten as
		\[
			F(x) = \operatorname{Re} f(e^{2\pi i x}),
		\]
		thus
		\[
			\rho_+(F) = \rho_+(f) \quad \rho_-(F) = \rho_-(f)
		\]

		The set $\Theta$ is contained in the union
		\[
			\{\lceil a_1 \rceil, \ldots, \lfloor b_1 \rfloor \} \cup \ldots \cup \{\lceil a_n \rceil, \ldots, \lfloor b_n \rfloor \}
		\]
		Propositions \ref{consecutive}, \ref{union} should give us
		\begin{multline*}
			\alpha(\Theta) \geq \alpha(\{\lceil a_1 \rceil, \ldots, \lfloor b_1 \rfloor \} \cup \ldots \cup \{\lceil a_n \rceil, \ldots, \lfloor b_n \rfloor \}) \\ \geq \alpha(\{\lceil a_1 \rceil, \ldots, \lfloor b_1 \rfloor \}) \ldots \alpha(\{\lceil a_n \rceil, \ldots, \lfloor b_n \rfloor \}) \\ \geq \frac{1}{1 + \frac{\lfloor b_1 \rfloor}{\lceil a_1 \rceil}} \ldots \frac{1}{1 + \frac{\lfloor b_n \rfloor}{\lceil a_n \rceil}} \geq \frac{1}{1 + \frac{b_1}{a_1}} \ldots \frac{1}{1 + \frac{b_n}{a_n}}
		\end{multline*}
		And by Theorem \ref{connect}
		\[
			\min(\rho_+(f), \rho_-(f)) \geq \alpha(\Theta) \geq \frac{1}{1 + \frac{b_1}{a_1}} \ldots \frac{1}{1 + \frac{b_n}{a_n}}
		\]
	\end{proof}

	In an analogous way, Proposition \ref{pair} translates to
	\begin{theorem}
		Let $F$ be a non-zero function defined by
		\[
			F(x) = A\cos(px) + B\cos(qx),
		\]
		where $A, B \in \mathbb{R}$, numbers $p$ and $q$ are coprime and $p + q$ is not divisible by $2$, then $\rho_-(F)$ and $\rho_+(F)$ can be bounded from below by
		\[
			\min(\rho_+(F), \rho_-(F)) \geq \frac{1}{2} - \frac{1}{2(p + q)}
		\]
	\end{theorem}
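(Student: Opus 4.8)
The plan is to mirror the proof of Theorem \ref{spectrum}, but replacing the appeal to Propositions \ref{consecutive} and \ref{union} with an appeal to Proposition \ref{pair}. First I would reduce the trigonometric function $F(x) = A\cos(px) + B\cos(qx)$ to a polynomial on the unit circle. Since $\rho_+$ and $\rho_-$ are invariant under the dilation $x \mapsto cx$, and since $p, q$ are already coprime integers, no rescaling is actually needed here; I set $f(z) = A z^p + B z^q$ and observe that $\cos(px) = \operatorname{Re}(e^{ipx})$, so that $F(x) = \operatorname{Re} f(e^{ix})$. Hence $\rho_+(F) = \rho_+(f)$ and $\rho_-(F) = \rho_-(f)$, exactly as in the proof of Theorem \ref{spectrum}. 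One caveat: $f$ must be non-zero as a polynomial of the form (\ref{f_z}), which holds as long as $F$ is non-zero — if both $A$ and $B$ vanish then $F \equiv 0$, excluded by hypothesis; and the degenerate possibility $p = q$ is ruled out since $p+q$ would then be even.

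Next I would invoke Proposition \ref{pair} with the set $D = \{p, q\}$: since $p$ and $q$ are coprime, $\alpha(\{p,q\}) = \lfloor (p+q)/2 \rfloor / (p+q)$. Because $p + q$ is odd, $\lfloor (p+q)/2 \rfloor = (p + q - 1)/2$, so
\[
	\alpha(\{p, q\}) = \frac{p + q - 1}{2(p+q)} = \frac{1}{2} - \frac{1}{2(p+q)}.
\]
Then Theorem \ref{connect} gives $\alpha(\{p,q\}) \leq \rho(\{p,q\}) \leq \min(\rho_+(f), \rho_-(f))$, and combining with $\rho_\pm(F) = \rho_\pm(f)$ yields the claimed bound
\[
	\min(\rho_+(F), \rho_-(F)) \geq \frac{1}{2} - \frac{1}{2(p+q)}.
\]

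I do not anticipate a serious obstacle: the argument is a direct transcription of the Fourier-to-polynomial dictionary already set up before Theorem \ref{spectrum}, followed by the exact-value computation of Proposition \ref{pair} and the inequality of Theorem \ref{connect}. The only point requiring a moment's care is the bookkeeping with the floor function — confirming that the parity hypothesis on $p+q$ is precisely what converts $\lfloor (p+q)/2 \rfloor/(p+q)$ into the stated closed form $\tfrac12 - \tfrac{1}{2(p+q)}$ — and the trivial check that the coprimality and parity assumptions together force $p \neq q$, so that $f$ genuinely has two distinct frequencies and the whole machinery applies.
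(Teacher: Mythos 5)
Your proposal is correct and follows exactly the route the paper takes: the paper's own proof is just the one-liner ``As in Theorem~\ref{spectrum} we arrive at $\min(\rho_+(F),\rho_-(F)) \geq \alpha(\{p,q\}) = \lfloor(p+q)/2\rfloor/(p+q) = \tfrac12 - \tfrac{1}{2(p+q)}$,'' i.e.\ the same Fourier-to-polynomial reduction followed by Proposition~\ref{pair} and Theorem~\ref{connect}. You merely spell out the reduction $F(x) = \operatorname{Re}\,f(e^{ix})$ with $f(z) = Az^p + Bz^q$ and the parity bookkeeping that the paper leaves implicit.
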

	\begin{proof}
		As in Theorem \ref{spectrum} we arrive at
		\[
			\min(\rho_+(F), \rho_-(F)) \geq \alpha(\{p, q\}) = \frac{\lfloor \frac{p + q}{2} \rfloor}{p + q} = \frac{\frac{p + q - 1}{2}}{p + q} = \frac{1}{2} - \frac{1}{2(p + q)}
		\]
	\end{proof}

	\section{Generalizations.}
		
	\subsection{Power series.}

	The value $\alpha(D)$ can be considered for infinite sets of natural numbers.
	\begin{definition}
		For an arbitrary $D \subseteq \mathbb{N}$ define
		\[
			\alpha(D) = \inf_{n} \alpha(D \cap \{1, \ldots, n\})
		\]
	\end{definition}

	By $\mathbb{T}$ denote the unit circle in the complex plane. Let $\lambda$ be the normalized Lebesgue measure on $\mathbb{T}$. In simpler terms, we divide the Lebesgue measure on $\mathbb{T}$ by $2\pi$ to get $\lambda(\mathbb{T}) = 1$.

	\begin{definition}
		For a continuous function $f\colon \mathbb{T} \to \mathbb{C}$ define
		\begin{align*}
			\rho_+(f) = \lambda(\{ z \in \mathbb{T} \mid \operatorname{Re} f(z) > 0 \}) \\
			\rho_-(f) = \lambda(\{ z \in \mathbb{T} \mid \operatorname{Re} f(z) < 0 \})
		\end{align*}
	\end{definition}

	\begin{theorem}
		\label{an_connect}
		Let
		\[
			\sum_{k = 1}^{\infty} a_k z^k
		\]
		be a power series that converges uniformly on $\mathbb{T}$ to a continuous function $f(z)$ such that
		\[
			\lambda(\{ z \in \mathbb{T} \mid \operatorname{Re} f(z) = 0\}) = 0
		\]
		For the set $D$ consisting of natural numbers $k$ with $a_k \neq 0$ we shall have
		\[
			\alpha(D) \leq \min(\rho_+(f), \rho_-(f))
		\]
	\end{theorem}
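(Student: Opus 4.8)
The plan is to reduce to the polynomial case already settled in Theorem~\ref{connect} by applying it to the partial sums $f_n(z) = \sum_{k=1}^{n} a_k z^k$ and letting $n \to \infty$. First observe that $D \neq \emptyset$: otherwise $f \equiv 0$, hence $\operatorname{Re} f \equiv 0$ on $\mathbb{T}$, contradicting $\lambda(\{\operatorname{Re} f = 0\}) = 0$. Put $N_0 = \min D$. For every $n \geq N_0$ the partial sum is a non-zero polynomial $f_n(z) = \sum_{k \in D_n} a_k z^k$ of the form~(\ref{the_form}) with frequency set $D_n := D \cap \{1, \ldots, n\}$, so Theorem~\ref{connect} gives $\alpha(D_n) \leq \rho(D_n) \leq \min(\rho_+(f_n), \rho_-(f_n))$, the last inequality being immediate from the definition of $\rho(D_n)$ as a minimum over all such polynomials. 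Since $D_n$ is one of the truncations occurring in the infimum that defines $\alpha(D)$ for an infinite set, $\alpha(D) \leq \alpha(D_n)$, and therefore
\[
	\alpha(D) \leq \rho_+(f_n) \quad\text{and}\quad \alpha(D) \leq \rho_-(f_n) \qquad \text{for all } n \geq N_0 .
\]

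Next I would bound $\rho_+(f_n)$ from above in terms of $f$ using uniform convergence. Fix $\varepsilon > 0$ and choose $n_\varepsilon$ with $\sup_{z \in \mathbb{T}} |f_n(z) - f(z)| < \varepsilon$ for all $n \geq n_\varepsilon$. Since $|\operatorname{Re} w| \leq |w|$, whenever $\operatorname{Re} f_n(z) > 0$ we have $\operatorname{Re} f(z) > -\varepsilon$, so
\[
	\{ z \in \mathbb{T} \mid \operatorname{Re} f_n(z) > 0 \} \subseteq \{ z \in \mathbb{T} \mid \operatorname{Re} f(z) > -\varepsilon \},
\]
whence $\rho_+(f_n) \leq \lambda(\{ \operatorname{Re} f > -\varepsilon \})$ for $n \geq n_\varepsilon$. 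Combined with the previous display, $\alpha(D) \leq \lambda(\{ \operatorname{Re} f > -\varepsilon \})$ for every $\varepsilon > 0$.

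Finally, let $\varepsilon \downarrow 0$. The sets $\{ \operatorname{Re} f > -\varepsilon \}$ decrease to $\{ \operatorname{Re} f \geq 0 \}$ and all have measure at most $\lambda(\mathbb{T}) = 1$, so continuity of measure from above yields $\lambda(\{ \operatorname{Re} f > -\varepsilon \}) \to \lambda(\{ \operatorname{Re} f \geq 0 \})$. This is where the hypothesis $\lambda(\{ \operatorname{Re} f = 0 \}) = 0$ enters: it gives $\lambda(\{ \operatorname{Re} f \geq 0 \}) = \lambda(\{ \operatorname{Re} f > 0 \}) = \rho_+(f)$, hence $\alpha(D) \leq \rho_+(f)$. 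Running the same argument for $-f$ (whose series also converges uniformly, whose nonzero coefficients lie on the same set $D$, and for which $\{\operatorname{Re}(-f) = 0\}$ is again null) and using $\rho_+(-f) = \rho_-(f)$ gives $\alpha(D) \leq \rho_-(f)$, so $\alpha(D) \leq \min(\rho_+(f), \rho_-(f))$.

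The only step requiring genuine care is this last limit, where the null-set hypothesis must be invoked to prevent $\lim_{\varepsilon \downarrow 0} \lambda(\{\operatorname{Re} f > -\varepsilon\})$ from exceeding $\rho_+(f)$; everything else is a routine combination of Theorem~\ref{connect} with uniform convergence. (When $D$ happens to be finite, the theorem is immediate from Theorem~\ref{connect} applied to $f = f_n$ for $n \geq \max D$, which needs no separate hypothesis on the zero set.)
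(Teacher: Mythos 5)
Your proposal is correct and follows essentially the same route as the paper: truncate to partial sums, apply Theorem~\ref{connect} to $f_n$, use uniform convergence to control the discrepancy between $\rho_+(f_n)$ and $\rho_+(f)$, and invoke continuity of measure from above together with the null-set hypothesis on $\{\operatorname{Re} f = 0\}$. The only difference is bookkeeping — the paper applies continuity from above to the sets $E_m = \{|\operatorname{Re} f| < 1/m\}$ before choosing the truncation index, while you apply it at the end to $\{\operatorname{Re} f > -\varepsilon\}$ — but the argument is the same.
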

	\begin{proof}
		Define
		\[
			E_n = \left\{ z \in \mathbb{T} \left| |\operatorname{Re} f(z)| < \frac{1}{n} \right.\right\}
		\]
		Measure $\lambda$ is finite, sets $E_n$ are decreasing, meaning that $E_1 \supseteq E_2 \supseteq \ldots$, and their intersection
		\[
			\bigcap_{n} E_n = \lambda(\{ z \in \mathbb{T} \mid \operatorname{Re} f(z) = 0\})
		\]
		is of zero measure. By the continuity from above property of measure we shall have
		\[
			\lim_{n \to \infty} \lambda(E_n) = 0
		\]
		Thus for an arbitrary $\varepsilon > 0$ there is a number $m$ such that $\lambda(E_m) < \varepsilon$.

		By $f_n$ denote the $n$-th partial sum of our power series
		\[
			f_n(z) = \sum_{k = 1}^{n} a_k z^k
		\]
		Since $f_n$ converge uniformly on $\mathbb{T}$ to $f$, there is a number $n$ such that
		\[
			\forall z \in \mathbb{T}\colon |f(z) - f_n(z)| < \frac{1}{2m}
		\]
		But then the signs of $\operatorname{Re} f(z)$ and $\operatorname{Re} f_n(z)$ can differ only on a set of measure not greater than $\lambda(E_m) < \varepsilon$. By applying Theorem \ref{connect} we get
		\[
			\alpha(D) \leq \alpha(D \cap \{1, \ldots, n\}) \leq \rho_+(f_n) \leq \rho_+(f) - \varepsilon
		\]
		And since the choice of $\varepsilon$ was arbitrary, we should have
		\[
			\alpha(D) \leq \rho_+(f)
		\]
		Similarly, we show that
		\[
			\alpha(D) \leq \rho_-(f)
		\]
	\end{proof}

	\subsection{Polynomials in several variables.}

	Let $u$ and $v$ be two vectors from $\mathbb{Z}^m$. Define $u + v$ as $(u_1 + v_1, \ldots, u_m + v_m)$, $u - v$ as $(u_1 - v_1, \ldots, u_m - v_m)$, and $k \cdot v$ as $(k v_1, \ldots, k v_m)$ for $k \in \mathbb{Z}$. Zero vector $(0, \ldots, 0)$ is denoted by $\bm{0}$. We say that $u$ and $v$ are congruent modulo $n$ if
	\[
		\forall i\colon u_i \equiv v_i \Mod{n}
	\]
	For $X \subset \mathbb{Z}$ by $X^m$ we mean the set of all the vectors from $\mathbb{Z}^m$ whose coordinates belong to $X$. For two $A, B \subset \mathbb{Z}^m$ by $A + B$ we mean the set of all the vectors of the form $a + b$, where $a \in A$ and $b \in B$, and we define $-A$ as the set of all $-a$ with $a \in A$. By the $\max(A)$ we denote the maximum value that a coordinate of a vector from $A$ might have.

	\medskip

	\textbf{The combinatorial problem.} Let $D$ be a finite subset of $\mathbb{Z}^m \setminus \{\bm{0}\}$ such that $D = -D$. In other words, $D$ consists of pairs of opposite non-zero vectors. We call $D$ the set of differences. With each such set of vectors we associate a family of graphs $G(n, D)$. Each $G(n, D)$ has $\{0, \ldots, n - 1\}^m$ as its set of vertices, and two vertices $u$ and $v$ are adjacent if there is a vector $d \in D$ such that $u - v$ is congruent to $d$ modulo $n$.

	\begin{definition}
		By the $m$-circulant on $n^m$ vertices with the set of differences $D \subset \mathbb{Z}^m$ we mean the graph $G(n, D)$ defined above.
	\end{definition}

	Note that $G(n, D)$ does not have loops when $n > \max(D)$, since in our case $\max(D)$ is equal to the maximum absolute value that a coordinate of a vector from $D$ might have.

	\begin{theorem}
		\label{m_alpha_limit}
		\[
			\lim_{n \to \infty} \frac{\alpha(G(n, D))}{n^m} = \sup_{n} \frac{\alpha(G(n, D))}{n^m}
		\]
	\end{theorem}
	\begin{proof}
		Let
		\[
			\alpha_n = \frac{\alpha(G(n, D))}{n^m}
		\]
		Let $S \subseteq \{0, \ldots, n - 1\}^m$ be a maximum independent set of $G_n$. Note that
		\[
			S + \{0, n, \ldots, (t - 1)n \}^m
		\]
		is an independent set of $G_{tn}$ of size $t^m |S|$, thus
		\[
			\alpha_{tn} \geq \alpha_{n}
		\]

		Also $S$ is an independent set of the graph $G_{n + k}$ for all $k \geq \max(D)$, which implies
		\[
			\alpha_{n + k} \geq \left(\frac{n}{n + k}\right)^m \alpha_n
		\]

		The rest of the proof is analogous to the proof of Theorem \ref{alpha_limit}.
	\end{proof}

	\begin{definition}
		For a finite set of differences $D$ define
		\[
			\alpha(D) = \lim_{n \to \infty} \frac{\alpha(G(n, D))}{n^m}
		\]
	\end{definition}

	\medskip

	\textbf{The analytic problem.} By $\lambda_m$ we denote the normalized Lebesgue measure on $\mathbb{T}^m$. In other words, the Lebesgue measure on $\mathbb{T}^m$ could be multiplied by a constant so that $\lambda_m(\mathbb{T}^m) = 1$.

	For a finite set of differences $D \subset \mathbb{Z}^m \setminus \{ \bm{0} \}$ consider a non-zero complex Laurent polynomial in several variables of the form
	\begin{equation}
		\label{m_form}
		f(z) = f(z_1, \ldots, z_m) = \sum_{d \in D} c_d z_1^{d_1}\ldots z_m^{d_m}
	\end{equation}
	such that
	\begin{equation}
		\label{prop}
		\forall d \in D\colon c_{-d} = \overline{c_d}
	\end{equation}
	On $\mathbb{T}^n$ the polynomial $f$ takes real values
	\[
		\forall z \in \mathbb{T}^n\colon f(z) \in \mathbb{R}
	\]

	\begin{lemma}
		\label{zero_set}
		Let $f$ be a non-zero complex polynomial in several variables. The set of points on $\mathbb{T}^m$ where $f = 0$ is of zero Lebesgue measure.
	\end{lemma}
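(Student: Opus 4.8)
The plan is to induct on the number of variables $m$, using Fubini's theorem for the inductive step, after a preliminary reduction to the case of an ordinary polynomial. First I would note that although the lemma is applied to Laurent polynomials, we may assume $f$ is an ordinary polynomial: multiplying $f$ by a monomial $z_1^N \cdots z_m^N$ with $N$ large enough clears all negative exponents, and since monomials never vanish on $\mathbb{T}^m$ this leaves the zero set unchanged and keeps $f$ non-zero. Also observe that the zero set $Z = \{ z \in \mathbb{T}^m \mid f(z) = 0 \}$ is closed (as the preimage of $\{0\}$ under a continuous function), hence Borel, so all the measurability needed below is automatic.

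For the base case $m = 1$, a non-zero polynomial in one variable has only finitely many roots in $\mathbb{C}$, so $Z$ is a finite subset of $\mathbb{T}$ and $\lambda(Z) = 0$.

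For the inductive step, assume the statement holds in $m - 1$ variables. Write
\[
	f(z_1, \ldots, z_m) = \sum_{k = 0}^{K} g_k(z_1, \ldots, z_{m - 1}) z_m^k .
\]
Since $f$ is non-zero, some coefficient $g_{k_0}$ is a non-zero polynomial in $m - 1$ variables; let $Z' \subset \mathbb{T}^{m - 1}$ be its zero set, which by the inductive hypothesis satisfies $\lambda_{m - 1}(Z') = 0$. For every $w = (z_1, \ldots, z_{m - 1}) \in \mathbb{T}^{m - 1} \setminus Z'$ the one-variable polynomial $z_m \mapsto f(w, z_m)$ is non-zero, because its coefficient of $z_m^{k_0}$ equals $g_{k_0}(w) \neq 0$; hence $\{ z_m \in \mathbb{T} \mid f(w, z_m) = 0 \}$ is finite and has $\lambda$-measure zero. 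Since $\lambda_m$ is the product of $\lambda_{m - 1}$ with $\lambda$ and $Z$ is measurable, Fubini's theorem gives
\[
	\lambda_m(Z) = \int_{\mathbb{T}^{m - 1}} \lambda\bigl( \{ z_m \in \mathbb{T} \mid f(w, z_m) = 0 \} \bigr) \, d\lambda_{m - 1}(w),
\]
and the integrand vanishes for all $w \notin Z'$, a set of full $\lambda_{m - 1}$-measure, so $\lambda_m(Z) = 0$.

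The argument is essentially routine; the only delicate point is justifying the application of Fubini, which is handled once we record that $Z$ is closed and hence Borel. I do not anticipate a genuine obstacle here, only the need to keep the bookkeeping of the induction and the product-measure structure of $\lambda_m$ straight.
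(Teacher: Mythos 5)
Your proof is correct and follows essentially the same route as the paper: induction on the number of variables, isolating a non-zero coefficient polynomial in $m-1$ variables whose zero set is null by the inductive hypothesis, and concluding via Fubini that the full zero set is null since each good slice contributes a finite set. The only cosmetic differences are that you take an arbitrary non-zero coefficient $g_{k_0}$ rather than the leading coefficient, and you add the (useful but implicit in the paper) remarks about measurability and the reduction from Laurent to ordinary polynomials.
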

	\begin{proof}
		We will prove the statement of the lemma by induction on the number of variables. When $f$ is a non-zero complex polynomial of one variable, the statement follows from the fact that $f$ can only have a finite number of zeros.

		A polynomial in $m > 1$ variables could be written in the form
		\[
			f(z) = z_m^{n} p_1(z_1, \ldots, z_{m - 1}) + \ldots + p_n(z_1, \ldots, z_{m - 1}),
		\]
		where $p_i$ are some other polynomials and $p_1 \neq 0$. The subset of $\mathbb{T}^{m - 1}$ where $p_1(z_1, \ldots, z_{m - 1}) = 0$ is of Lebesgue measure $\lambda_{m - 1}$ zero. But when $p_1(a_1, \ldots, a_{m - 1}) \neq 0$, the one variable polynomial defined as
		\[
			g(z_m) = f(a_1, \ldots, a_{m - 1}, z_m)
		\]
		is non-zero and only has a finite number of roots, implying that on the one-dimensional slice of $\mathbb{T}^m$ defined by
		\[
			z_1 = a_1 \quad \ldots \quad z_{m - 1} = a_{m - 1}
		\]
		the roots of $f$ form a set of Lebesgue measure $\lambda_1$ zero. Thus the induction step follows from the Fubini's theorem.
	\end{proof}

	We define $\rho_+(f)$ and $\rho_-(f)$ as
	\[
		\rho_+(f) = \lambda_m(\{ z \in \mathbb{T}^m \mid f(z) > 0 \})
	\]
	\[
		\rho_-(f) = \lambda_m(\{ z \in \mathbb{T}^m \mid f(z) < 0 \})
	\]
	By Lemma \ref{zero_set} we have $\rho_+(f) + \rho_-(f) = 1$, since on $\mathbb{T}^m$ the zeros of $f$ coincide with the zeros of
	\[
		z_1^{\max(D)} \ldots z_m^{\max(D)} \sum_{d \in D} c_d z_1^{d_1}\ldots z_m^{d_m}
	\]

	We are interested in lower bounds on $\min(\rho_+(f), \rho_-(f))$. As we have done before, we can restrict our attention to the values that $\rho_-(f)$ takes on the set $F_D$ of polynomials defined by (\ref{m_form}) and (\ref{prop}) for which
	\[
		\sum_{d \in D} |c_d|^2 = 1
	\]
	In each pair $c_{-d}, c_d$ it is enough to consider only one of the coefficients, since the other one could be derived from equality $c_{-d} = \overline{c_d}$. Furthermore, $|c_d|^2 = (\operatorname{Re} c_d)^2 + (\operatorname{Im} c_d)^2$, so $F_D$ could be viewed as the $2|D|$-dimensional sphere in $\mathbb{R}^{2|D|}$, and thus is compact.

	\begin{lemma}
		\label{rho_continuous}
		The function $\rho_-(f)$ is continuous on $F_D$.
	\end{lemma}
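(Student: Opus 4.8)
The plan is to show that small perturbations of the coefficient vector produce small changes in $\rho_-(f)$, exploiting the fact that $f$ vanishes only on a null set. Fix $f \in F_D$ and let $g \in F_D$ be a nearby polynomial; write $\delta = \max_{d}|c_d(f) - c_d(g)|$, so that on $\mathbb{T}^m$ we have the uniform bound $|f(z) - g(z)| \le |D|\,\delta =: \varepsilon'$. The sign of $f$ and the sign of $g$ can disagree at a point $z$ only if $|f(z)| \le \varepsilon'$, i.e. only on the set $E_{\varepsilon'} = \{z \in \mathbb{T}^m : |f(z)| \le \varepsilon'\}$. Hence
\[
	|\rho_-(g) - \rho_-(f)| \le \lambda_m(E_{\varepsilon'}).
\]
So it suffices to prove that $\lambda_m(E_{\varepsilon'}) \to 0$ as $\varepsilon' \to 0$.

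For this I would invoke continuity from above of the finite measure $\lambda_m$, exactly as in the proof of Theorem \ref{an_connect}. The sets $E_{1/k} = \{z : |f(z)| \le 1/k\}$ are closed, decreasing in $k$, and their intersection is the zero set $\{z \in \mathbb{T}^m : f(z) = 0\}$, which has $\lambda_m$-measure zero by Lemma \ref{zero_set} (note $f$ here is, up to multiplication by the nonvanishing monomial $z_1^{\max(D)}\cdots z_m^{\max(D)}$, a genuine polynomial, so the lemma applies). Therefore $\lambda_m(E_{1/k}) \to 0$, and since $E_{\varepsilon'} \subseteq E_{1/k}$ once $\varepsilon' \le 1/k$, we get $\lambda_m(E_{\varepsilon'}) \to 0$ as $\varepsilon' \to 0$. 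Combining with the displayed inequality: given $\eta > 0$, pick $k$ with $\lambda_m(E_{1/k}) < \eta$, then choose the perturbation so small that $|D|\,\delta \le 1/k$; this forces $|\rho_-(g) - \rho_-(f)| < \eta$, which is continuity at $f$.

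The main obstacle is purely bookkeeping: making sure that the zero set of the Laurent polynomial $f$ really has measure zero so that Lemma \ref{zero_set} is applicable — this is handled by clearing denominators with the monomial factor that does not vanish on $\mathbb{T}^m$ — and being careful that $f$ is not identically zero, which holds since $f \in F_D$ forces $\sum_{d}|c_d|^2 = 1$. Everything else is the standard $\varepsilon$–$\delta$ argument together with continuity from above of a finite measure; no genuinely hard estimate is needed. One could alternatively phrase the whole argument with $\rho_+$ in place of $\rho_-$ verbatim, but since $\rho_-(f) = \rho_+(-f)$ and negation is an isometry of $F_D$, continuity of one yields continuity of the other.
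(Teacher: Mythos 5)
Your proposal is correct and follows essentially the same route as the paper: bound $|f-g|$ uniformly on $\mathbb{T}^m$ by the coefficient difference, observe that the signs of $f$ and $g$ can disagree only on $\{|f|\le \varepsilon'\}$, and let the measure of these sets go to $0$ via Lemma \ref{zero_set} together with continuity from above of the finite measure $\lambda_m$. Your additional remark about clearing denominators with the monomial $z_1^{\max(D)}\cdots z_m^{\max(D)}$ so that Lemma \ref{zero_set} (stated for genuine polynomials) applies to Laurent polynomials is a legitimate bookkeeping point that the paper leaves implicit.
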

	\begin{proof}
		For a non-zero Laurent polynomial defined by (\ref{m_form}) and (\ref{prop}) we can consider the sets
		\[
			E_n = \left\{ z \in \mathbb{T}^m \left| |f(z)| \leq \frac{1}{n} \right. \right\}
		\]
		By Lemma \ref{zero_set} the set
		\[
			\bigcap_{n} E_n = \left\{ z \in \mathbb{T}^m \left| f(z) = 0 \right. \right\}
		\]
		is of zero Lebesgue measure. Since $E_1 \supseteq E_2 \supseteq \ldots$ and $\lambda_m(E_1) \leq 1$, we shall have
		\[
			\lim_{n \to \infty} \lambda_m(E_n) = 0
		\]
		by the continuity from above property of a measure. So for every $\varepsilon > 0$ there is a number $N$ such that $\lambda_m(E_N) < \varepsilon$.

		Consider an arbitrary function
		\[
			g(z) = \sum_{d \in D} a_d z_1^{d_1} \ldots z_m^{d_m}
		\]
		with $a_{-d} = \overline{a_d}$, whose coefficients, in addition, satisfy
		\[
			|a_d| \leq \frac{1}{2 N|D|}
		\]
		For every $z \in \mathbb{T}^m$ and $d \in D$
		\[
			|z_1^{d_1} \ldots z_m^{d_m}| = 1,
		\]
		thus
		\[
			\forall z \in \mathbb{T}^m\colon |g(z)| \leq \frac{1}{2N}
		\]

		So if we add $g$ to $f$ the sign of our function could change only on the set $E_N$, whose Lebesgue measure is lesser than $\varepsilon$, and, consequently, $\rho_-(f)$ would change at most by $\varepsilon$. This shows that $\rho_-(f)$ is continuous at every non-zero $f$.
	\end{proof}

	Continuous function on a compact set attains its minimum value, and thus the following definition makes sense

	\begin{definition}
		For a finite set of differences $D \subset \mathbb{Z}^m \setminus \{ \bm{0} \}$ by $\rho(D)$ we mean the least value that $\min(\rho_+(f), \rho_-(f))$ could take.
	\end{definition}

	\medskip

	\textbf{The connection.} To establish the generalization of the Theorem \ref{connect} we would need to prove some lemmas first.

	Denote the exponential function by $\exp$. By the $n$-th lattice we mean the set of points on $\mathbb{T}^m$ defined as
	\[
		B_n = \left\{ \left.\left(\exp\left(\frac{2\pi i}{2^n} a_1\right), \ldots, \exp\left(\frac{2\pi i}{2^n} a_m\right)\right) \right| (a_1, \ldots, a_m) \in \mathbb{Z}^m \right\}
	\]
	By the $n$-th closed box centered at a point $(x_1, \ldots, x_m) \in B_n$ we mean the set
	\[
		\left\{ \left(x_1 \exp(2\pi i \theta_1), \ldots, x_m \exp(2\pi i \theta_m) \right) \left| (\theta_1, \ldots, \theta_m) \in \left[-\frac{1}{2^{n + 1}}; +\frac{1}{2^{n + 1}} \right] \right.\right\}
	\]
	Note that the $n$-th closed boxes cover $\mathbb{T}^m$.

	For a function $f$ discussed above define the sets
	\begin{align*}
		F_+ = \{ z \in \mathbb{T}^m \mid f(z) > 0 \} \quad & \lambda_m(F_+) = \rho_+(f) \\
		F_- = \{ z \in \mathbb{T}^m \mid f(z) < 0 \} \quad & \lambda_m(F_-) = \rho_-(f) \\
		F_0 = \{ z \in \mathbb{T}^m \mid f(z) = 0 \} \quad & \lambda_m(F_0) = 0
	\end{align*}

	\begin{lemma}
		\label{dyadic_approx}
		For a function $f$ described by (\ref{m_form}) and (\ref{prop}) we have
		\begin{align*}
			\lim_{n \to \infty} \frac{|F_+ \cap B_n|}{2^{nm}} &= \lambda_m(F_+) \\
			\lim_{n \to \infty} \frac{|F_- \cap B_n|}{2^{nm}} &= \lambda_m(F_-) \\
			\lim_{n \to \infty} \frac{|F_0 \cap B_n|}{2^{nm}} &= 0
		\end{align*}
	\end{lemma}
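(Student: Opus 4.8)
The plan is to combine the uniform continuity of $f$ on the compact torus $\mathbb{T}^m$ with the fact — already exploited in the proof of Lemma \ref{rho_continuous} — that the sublevel sets $E_\delta = \{z \in \mathbb{T}^m : |f(z)| \le \delta\}$ decrease to $F_0$ as $\delta \to 0^+$, so that $\lambda_m(E_\delta) \to 0$ by continuity from above of the measure (using Lemma \ref{zero_set} to know $\lambda_m(F_0) = 0$). The key geometric input is that the $n$-th closed boxes form $2^{nm}$ sets of measure $2^{-nm}$ each, centered at the points of $B_n$, covering $\mathbb{T}^m$ and overlapping pairwise only on sets of measure zero.

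First I would fix $\varepsilon > 0$ and pick $\delta > 0$ with $\lambda_m(E_{2\delta}) < \varepsilon$. By uniform continuity there is $N$ such that for every $n \ge N$ the oscillation of $f$ on each $n$-th closed box is less than $\delta$. For such $n$, classify the $2^{nm}$ boxes by the value of $f$ at the center $x \in B_n$: type $(+)$ if $f(x) > \delta$, type $(-)$ if $f(x) < -\delta$, ambiguous if $|f(x)| \le \delta$. Because the oscillation on a box is $< \delta$, a type $(+)$ box is contained in $F_+$, a type $(-)$ box in $F_-$, and an ambiguous box in $E_{2\delta}$. Writing $a, b, c$ for the numbers of boxes of the three types, disjointness up to measure zero gives $a\,2^{-nm} \le \lambda_m(F_+)$, $b\,2^{-nm} \le \lambda_m(F_-)$, and $c\,2^{-nm} \le \lambda_m(E_{2\delta}) < \varepsilon$.

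Then I would relate the counts to the lattice-point quantities. The center of a type $(+)$ box lies in $F_+$, so $a \le |F_+ \cap B_n|$; a center lying in $F_+$ cannot be the center of a type $(-)$ box, so $|F_+ \cap B_n| \le a + c$. Since the type $(-)$ boxes lie in $F_-$ and hence miss $F_+$, the set $F_+$ is covered by the type $(+)$ and ambiguous boxes, so $\lambda_m(F_+) \le (a + c)\,2^{-nm}$. Thus both $|F_+ \cap B_n|\,2^{-nm}$ and $\lambda_m(F_+)$ lie in the interval $[a\,2^{-nm},\,(a+c)\,2^{-nm}]$, of length $c\,2^{-nm} < \varepsilon$, which gives $\big|\,|F_+ \cap B_n|\,2^{-nm} - \lambda_m(F_+)\,\big| < \varepsilon$ for all $n \ge N$. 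The identical argument for $F_-$ gives the second limit, and $|F_0 \cap B_n| \le c < 2^{nm}\varepsilon$ (a center in $F_0$ is neither type $(+)$ nor type $(-)$) yields the third; the third also follows from the first two since the three counts sum to $2^{nm}$.

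The only substantive point, and the one to be careful about, is the claim $\lambda_m(E_\delta) \to 0$ as $\delta \to 0^+$; but this is precisely the continuity-from-above computation carried out in Lemma \ref{rho_continuous} together with Lemma \ref{zero_set}. Everything else is bookkeeping with the equal-measure partition of $\mathbb{T}^m$ into $n$-th boxes and the observation that these boxes overlap only on null sets, so that inclusions of finite unions of boxes translate directly into the displayed inequalities between counts and measures.
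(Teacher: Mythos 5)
Your proof is correct, and it takes a genuinely different route from the paper's. The paper argues purely from the topology and measure theory of the level sets: it builds an increasing sequence $I_n$ of unions of boxes that exhaust the open set $F_+$ from within (continuity from below), and a decreasing sequence $O_n$ of unions of boxes meeting $\overline{F_+}$ whose measures converge to $\lambda_m(\overline{F_+}) = \lambda_m(F_+)$ from above, the latter requiring outer regularity of $\lambda_m$ together with compactness of $\overline{F_+}$ and of $\mathbb{T}^m\setminus V$ to force $O_n\subset V$ eventually; it then squeezes $|F_+\cap B_n|/2^{nm}$ between $\lambda_m(I_n)$ and $\lambda_m(O_n)$. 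You instead exploit the \emph{analytic} structure: uniform continuity of $f$ on the compact $\mathbb{T}^m$ bounds the oscillation on each small box, so classifying boxes by the value of $f$ at the center (using $\lambda_m(E_{2\delta})\to 0$ from Lemma \ref{rho_continuous} and Lemma \ref{zero_set}) gives a clean quantitative bound $\bigl|\,|F_\pm\cap B_n|\,2^{-nm}-\lambda_m(F_\pm)\bigr|<\varepsilon$ for all $n\geq N$. Your argument sidesteps the outer-regularity/compactness step entirely and is more explicit, at the cost of being tied to a continuous function $f$; the paper's version is slightly more general, as it would work for any pair of open sets whose common boundary is null, independent of any defining function. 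Both are sound; the disjointness-up-to-null-sets of the $n$-th boxes is used in the same essential way in each.
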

	\begin{proof}
		Consider the limit for $F_+$ first. Note that by the continuity of $f$ the set $F_+$ is open and its closure $\overline{F_+}$ is contained inside $F_+ \cup F_0$.

		By $I_n$ denote the union of all the closed $n$-th boxes that lie strictly inside $F_+$. Sets $I_n$ are increasing, meaning that $I_1 \subseteq I_2 \subseteq \ldots$ Each $I_n$ is a subset of $F_+$, so $\lambda_m(I_n) \leq \lambda_m(F_+)$. Moreover, for each $p \in F_+$ the open set $F_+$ contains inside some neighborhood of $p$, thus for some $n$ there is a closed $n$-th box that contains $p$ and lies inside $F_+$. From this we conclude
		\[
			\bigcup_{n} I_n = F_+
		\]

		By the continuity from below property of a measure we shall have
		\begin{equation}
			\label{app_1}
			\lim_{n \to \infty} \lambda_m(I_n) = \lambda_m(F_+)
		\end{equation}
		If the $n$-th closed box lies inside $F_+$, then so does its center from $B_n$, thus we shall have
		\begin{equation}
			\label{app_2}
			\lambda_m(I_n) \leq \frac{|F_+ \cap B_n|}{2^{nm}}
		\end{equation}

		Since $F_0$ is of measure zero
		\[
			\lambda_m(\overline{F_+}) = \lambda_m(F_+)
		\]
		By $O_n$ denote the union of all the closed $n$-th boxes that intersect $\overline{F_+}$. Each $O_n$ contains $\overline{F_+}$ inside, implying $\lambda_m(O_n) \geq \lambda_m(F_+)$. Also the sets $O_n$ are decreasing, meaning that $O_1 \supseteq O_2 \supseteq \ldots$

		On the unit circle $\mathbb{T}$ we can define a metric $d(x, y)$ as the length of the shortest circular arc between $x$ and $y$. Similarly, on $\mathbb{T}^m$ we can define a metric $d_m$ by
		\[
			d_m(x, y) = d(x_1, y_1) + \ldots + d(x_m, y_m)
		\]
		The largest distance between two points inside of a closed $n$-th box tends to $0$ as $n$ tends to infinity.

		By the outer regularity of our Lebesgue measure $\lambda_m$ for every $\varepsilon > 0$ there is an open set $V$ of measure $\lambda_m(\overline{F_+}) + \varepsilon$ that contains $\overline{F_+}$. Closed subsets of the compact space $\mathbb{T}^m$ are compact, thus $\overline{F_+}$ and $\mathbb{T}^m \setminus V$ are two disjoint closed compact sets in $\mathbb{T}^m$. Thus they must be at some non-zero distance from each other, meaning that there is a number $c > 0$ such that
		\[
			\forall x \in \overline{F_+}, y \in \mathbb{T}^m \setminus V\colon d_m(x, y) > c
		\]
		And since in the sense of the metric $d_m$ the diameter of the $n$-th closed boxes tends to $0$ as $n$ tends to infinity, we should have $O_n \subset V$ for all $n$ greater than some $N$, implying $\lambda_m(O_n) \leq \lambda_m(F_+) + \varepsilon$. In other words, for a large enough $n$ the $n$-th closed box could not intersect both $\overline{F_+}$ and $\mathbb{T}^m \setminus V$.

		So we shall have
		\begin{equation}
			\label{app_3}
			\lim_{n \to \infty} \lambda_m(O_n) = \lambda_m(F_+)
		\end{equation}
		Every point of $B_n$ that lies inside $F_+$ is a center of a closed $n$-th box that intersects $\overline{F_+}$, which means
		\begin{equation}
			\label{app_4}
			\frac{|F_+ \cap B_n|}{2^{nm}} \leq \lambda_m(O_n)
		\end{equation}

		Together (\ref{app_1}), (\ref{app_2}), (\ref{app_3}) and (\ref{app_4}) imply
		\[
			\lim_{n \to \infty} \frac{|F_+ \cap B_n|}{2^{nm}} = \lambda_m(F_+)
		\]
		Analogously, we can show that
		\[
			\lim_{n \to \infty} \frac{|F_- \cap B_n|}{2^{nm}} = \lambda_m(F_-)
		\]
		Since
		\[
			\frac{|F_+ \cap B_n|}{2^{nm}} + \frac{|F_- \cap B_n|}{2^{nm}} + \frac{|F_0 \cap B_n|}{2^{nm}} = 1
		\]
		and
		\[
			\lambda_m(F_+) + \lambda_m(F_-) = 1,
		\]
		we should also have
		\[
			\lim_{n \to \infty} \frac{|F_0 \cap B_n|}{2^{nm}} = 0
		\]
	\end{proof}

	We can add, subtract and multiply by an integer vectors from $\mathbb{Z}_n^m$. The zero vector will again be denoted by $\bm{0}$. The matrices here will be indexed by the set $\mathbb{Z}_n^m$.
	\begin{definition}
		Matrix $A = (a_{\bm{i}\bm{j}})$ indexed by $\mathbb{Z}_n^m$ is called $m$-circulant if the value of the entry $a_{\bm{i} \bm{j}}$ is determined by the difference $\bm{j} - \bm{i}$. In other words, $a_{\bm{i} \bm{j}} = a_{\bm{k} \bm{l}}$ whenever $\bm{j} - \bm{i} = \bm{l} - \bm{k}$.
	\end{definition}

	Let $\omega_n = e^{\frac{2\pi i}{n}}$.

	\begin{lemma}
		\label{m_eigen}
		Let $A = (a_{\bm{i} \bm{j}})$ be an $m$-circulant matrix. Denote the row corresponding to $\bm{0}$ by $x_{\bm{i}} = a_{0 \bm{i}}$. Then the eigenvalues of $A$ are the numbers
		\[
			\lambda_{\bm{j}} = \sum_{\bm{i}} x_{\bm{i}} \omega_n^{i_1 j_1} \ldots \omega_n^{i_m j_m}
		\]
		with multiplicities counted, where $\bm{j}$ runs through all the vectors from $\mathbb{Z}_n^m$.
	\end{lemma}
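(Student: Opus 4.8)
The plan is to carry over, essentially verbatim, the diagonalization argument already used inside the proof of Theorem \ref{connect}, replacing the cyclic group $\mathbb{Z}_n$ by the group $\mathbb{Z}_n^m$. Write $\langle \bm{i}, \bm{j}\rangle = i_1 j_1 + \cdots + i_m j_m$; this is well defined modulo $n$. For each $\bm{j} \in \mathbb{Z}_n^m$ introduce the vector $v_{\bm{j}} \in \mathbb{C}^{\mathbb{Z}_n^m}$ whose entry indexed by $\bm{k} \in \mathbb{Z}_n^m$ is
\[
(v_{\bm{j}})_{\bm{k}} = \omega_n^{\langle \bm{k}, \bm{j}\rangle},
\]
which makes sense because $\omega_n^n = 1$. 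First I would check that each $v_{\bm{j}}$ is an eigenvector of $A$ with eigenvalue $\lambda_{\bm{j}}$.

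For the eigenvector computation, note that the $m$-circulant hypothesis says $a_{\bm{k}\bm{l}}$ depends only on $\bm{l} - \bm{k}$; taking $\bm{k} = \bm{0}$ gives $a_{\bm{k}\bm{l}} = a_{\bm{0},\,\bm{l}-\bm{k}} = x_{\bm{l}-\bm{k}}$. Hence the entry of $A v_{\bm{j}}$ indexed by $\bm{k}$ equals
\[
\sum_{\bm{l} \in \mathbb{Z}_n^m} a_{\bm{k}\bm{l}}\,(v_{\bm{j}})_{\bm{l}} = \sum_{\bm{l} \in \mathbb{Z}_n^m} x_{\bm{l}-\bm{k}}\,\omega_n^{\langle \bm{l}, \bm{j}\rangle},
\]
and since $\bm{i} \mapsto \bm{i} + \bm{k}$ permutes $\mathbb{Z}_n^m$, substituting $\bm{i} = \bm{l} - \bm{k}$ turns this into $\omega_n^{\langle \bm{k}, \bm{j}\rangle} \sum_{\bm{i}} x_{\bm{i}}\,\omega_n^{\langle \bm{i}, \bm{j}\rangle} = \lambda_{\bm{j}}\,(v_{\bm{j}})_{\bm{k}}$. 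Thus $A v_{\bm{j}} = \lambda_{\bm{j}} v_{\bm{j}}$ for every $\bm{j}$.

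Next I would show that the $n^m$ vectors $\{v_{\bm{j}}\}_{\bm{j} \in \mathbb{Z}_n^m}$ form a basis of $\mathbb{C}^{\mathbb{Z}_n^m}$, which then forces the $\lambda_{\bm{j}}$ to be precisely the eigenvalues of $A$ with multiplicity (the characteristic polynomial being $\prod_{\bm{j}}(\lambda - \lambda_{\bm{j}})$). The cleanest route is to compute the standard Hermitian inner product
\[
\langle v_{\bm{j}}, v_{\bm{j}'}\rangle = \sum_{\bm{k} \in \mathbb{Z}_n^m} \omega_n^{\langle \bm{k},\, \bm{j} - \bm{j}'\rangle} = \prod_{t=1}^{m} \left( \sum_{k=0}^{n-1} \omega_n^{k(j_t - j_t')} \right),
\]
where the factorization over coordinates is immediate and each one-dimensional factor is $n$ when $j_t \equiv j_t' \Mod{n}$ and $0$ otherwise by the geometric series. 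Hence $\langle v_{\bm{j}}, v_{\bm{j}'}\rangle$ equals $n^m$ if $\bm{j} = \bm{j}'$ and $0$ otherwise, so the family is orthogonal, in particular linearly independent, and of size $n^m = \dim \mathbb{C}^{\mathbb{Z}_n^m}$; therefore it is a basis of eigenvectors.

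There is no real obstacle here — the statement is a routine generalization of the $m=1$ case handled earlier — so the only things to be careful about are the bookkeeping steps: deriving $a_{\bm{k}\bm{l}} = x_{\bm{l}-\bm{k}}$ from the definition of $m$-circulant, justifying the translation-invariance of the sum over $\mathbb{Z}_n^m$, and splitting the character sum into a product of one-dimensional geometric series. (If one preferred to bypass the inner-product computation, one could instead observe that $A$ lies in the group algebra $\mathbb{C}[\mathbb{Z}_n^m]$ and that the $v_{\bm{j}}$ are the characters of $\mathbb{Z}_n^m$, which are classically known to be linearly independent; but the direct orthogonality argument above is self-contained.)
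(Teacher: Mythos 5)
Your proof is correct and follows essentially the same route as the paper: exhibit the character vectors $v_{\bm{j}}$ (the paper's $w^{(\bm{k})}$), verify by direct computation that each is an eigenvector of $A$ with eigenvalue $\lambda_{\bm{j}}$, and then establish linear independence via the orthogonality of characters, computed as a product of one-dimensional geometric series.
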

	\begin{proof}
		For an index $\bm{k} \in \mathbb{Z}_{n}^m$ consider a vector $w^{(\bm{k})}$ defined by
		\[
			w^{(\bm{k})}_{\bm{j}} = \omega_{n}^{k_1 j_1} \ldots \omega_n^{k_m j_m}
		\]
		Note that $w$ is an eigenvector of the matrix $A$ with eigenvalue $\lambda_{\bm{k}}$, since
		\begin{multline*}
			(A w^{(\bm{k})})_{\bm{i}} = \sum_{\bm{j}} a_{\bm{i} \bm{j}} w_{\bm{j}} = \sum_{\bm{j}} x_{\bm{j} - \bm{i}} \omega_n^{k_1 j_1} \ldots \omega_n^{k_m j_m} \\ \sum_{\bm{j}} x_{\bm{j}} \omega_n^{k_1(j_1 + i_1)}\ldots\omega_n^{k_m(j_m + i_m)} = \lambda_{\bm{j}}\omega_n^{k_1 i_1} \ldots \omega_n^{k_m i_m} = \lambda_j w^{(\bm{k})}_{\bm{i}}
		\end{multline*}
		\[
			Aw^{(\bm{k})} = \lambda_j w^{(\bm{k})}
		\]

		And also note that the set of vectors $\{ w^{(\bm{k})} \}_{\bm{k} \in \mathbb{Z}_n^m}$ forms a basis. Indeed, if we regard our vector space as the hermitian space with the hermitian product defined by
		\[
			(u, v) = \sum_{\bm{i}} u_{\bm{i}} \overline{v_{\bm{i}}},
		\]
		then we shall have
		\begin{multline*}
			(w^{(\bm{i})}, w^{(\bm{j})}) = \sum_{\bm{k}} \omega_n^{k_1(j_1 - i_1)} \ldots \omega_n^{k_m(j_m - i_m)} \\ = \left( \sum_{k_1 = 0}^{n - 1} \omega_n^{k_1(j_1 - i_1)} \right) \ldots \left( \sum_{k_m = 0}^{n - 1} \omega_n^{k_m(j_m - i_m)} \right) = 0
		\end{multline*}
		whenever $\bm{i} \neq \bm{j}$, so in $\{ w^{(\bm{k})} \}_{\bm{k} \in \mathbb{Z}_n^m}$ vectors are pairwise orthogonal.

		Our two observations mean that in the basis $\{ w^{(\bm{k})} \}_{\bm{k} \in \mathbb{Z}_n^m}$ our matrix $A$ is diagonal, and that we have numbers $\lambda_{\bm{j}}$ on the main diagonal.
	\end{proof}

	\begin{theorem}
		\label{m_connect}
		For a finite set of differences $D \subset \mathbb{Z}^m \setminus \{ \bm{0} \}$
		\[
			\alpha(D) \leq \rho(D)
		\]
	\end{theorem}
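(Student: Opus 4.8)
The plan is to imitate the proof of Theorem~\ref{connect} almost verbatim, with the $n$-th roots of unity replaced by the dyadic lattices $B_N$ and the ordinary circulant matrix replaced by an $m$-circulant one. Since $\rho_+(f)$ and $\rho_-(f)$ do not change when $f$ is multiplied by a positive constant, it is enough to prove
\[
	\alpha(D)\le\min(\rho_+(f),\rho_-(f))
\]
for an arbitrary non-zero Laurent polynomial $f$ of the form (\ref{m_form}) satisfying (\ref{prop}); as $f$ is then arbitrary, this gives $\alpha(D)\le\rho(D)$. Fix $N$ large enough that $2^N>2\max(D)$ and set $n=2^N$. First I would build the $m$-circulant matrix $A=(a_{\bm i\bm j})$ indexed by $\mathbb{Z}_n^m$ by putting $a_{\bm i\bm j}=c_d$ whenever $\bm j-\bm i\equiv d\Mod{n}$ for some $d\in D$, and $a_{\bm i\bm j}=0$ otherwise. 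Because $n>2\max(D)$, every coordinate of a difference of two vectors of $D\cup\{\bm 0\}$ lies strictly between $-n$ and $n$, so such a $d$ is unique when it exists and $A$ is well defined; the hypothesis $c_{-d}=\overline{c_d}$ together with $D=-D$ makes $A$ hermitian, its diagonal vanishes (the graph $G_n$ is loopless for $n>\max(D)$), and $a_{\bm i\bm j}=0$ whenever $(\bm i,\bm j)$ is not an edge of the $m$-circulant graph $G_n$, so the hermitian form defined by $A$ is associated with $G_n$.

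Next I would read off the spectrum. By Lemma~\ref{m_eigen} the eigenvalues of $A$ are
\[
	\lambda_{\bm j}=\sum_{d\in D}c_d\,\omega_n^{d_1 j_1}\cdots\omega_n^{d_m j_m}=f\bigl(\omega_n^{j_1},\ldots,\omega_n^{j_m}\bigr),\qquad\bm j\in\mathbb{Z}_n^m,
\]
and these are real, since $f$ takes real values on $\mathbb{T}^m$. As $\bm j$ ranges over $\mathbb{Z}_n^m$ the point $(\omega_n^{j_1},\ldots,\omega_n^{j_m})$ ranges over the lattice $B_N$, each point exactly once, so the multiset of eigenvalues of $A$ equals $\{f(z):z\in B_N\}$. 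Hence, by Proposition~\ref{sgn_eigen}, the signature $(n_+,n_-,n_0)$ of the form equals $\bigl(|F_+\cap B_N|,\,|F_-\cap B_N|,\,|F_0\cap B_N|\bigr)$, and Proposition~\ref{alpha_sgn} yields
\[
	\frac{\alpha(G_n)}{n^m}\le\min\left(\frac{|F_+\cap B_N|}{2^{Nm}},\frac{|F_-\cap B_N|}{2^{Nm}}\right)+\frac{|F_0\cap B_N|}{2^{Nm}}.
\]
Now let $N\to\infty$. The left-hand side tends to $\sup_n\alpha(G_n)/n^m=\alpha(D)$ by Theorem~\ref{m_alpha_limit}, while the right-hand side tends to $\min(\rho_+(f),\rho_-(f))$ by Lemma~\ref{dyadic_approx}. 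This proves $\alpha(D)\le\min(\rho_+(f),\rho_-(f))$, and hence $\alpha(D)\le\rho(D)$.

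Most of the substance is already in place: the analytic fact that the dyadic lattice points are asymptotically equidistributed with respect to $F_+$, $F_-$ and $F_0$ is Lemma~\ref{dyadic_approx}, and the diagonalization of $m$-circulant matrices is Lemma~\ref{m_eigen}. \textbf{The step that needs the most care} is the bookkeeping in the first paragraph: one must check that choosing $n$ to be a power of $2$ exceeding $2\max(D)$ simultaneously makes $A$ well defined, hermitian, loopless and associated with $G_n$, and that the exponentials $(\omega_n^{j_1},\ldots,\omega_n^{j_m})$ enumerate $B_N$ bijectively, so that the spectral count genuinely becomes $|F_\pm\cap B_N|$ and the limit along $n=2^N$ can be taken using Lemma~\ref{dyadic_approx} and Theorem~\ref{m_alpha_limit}.
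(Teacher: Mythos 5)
Your proposal is correct and follows essentially the same route as the paper's own proof: construct the hermitian $m$-circulant matrix $A$ from the coefficients of $f$, read off its spectrum via Lemma~\ref{m_eigen} as the values $f(\omega_n^{j_1},\ldots,\omega_n^{j_m})$ over the dyadic lattice, apply Propositions~\ref{sgn_eigen} and~\ref{alpha_sgn} to get $\frac{\alpha(G_n)}{n^m}\le\min(\frac{n_+}{n^m},\frac{n_-}{n^m})+\frac{n_0}{n^m}$, and take the limit along $n=2^N$ using Theorem~\ref{m_alpha_limit} and Lemma~\ref{dyadic_approx}. You actually make explicit a few bookkeeping points (well-definedness of $A$, the bijection with $B_N$, looplessness) that the paper leaves implicit; otherwise the arguments coincide.
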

	\begin{proof}
		It would suffice to show that $\alpha(D) \leq \min(\rho_+(f), \rho_-(f))$ for a non-zero function $f(z)$ of the form
		\[
			\sum_{d \in D} c_d z_1^{d_1} \ldots z_m^{d_m}, \textrm{ where } c_{-d} = \overline{c_d}
		\]

		For $n \geq 2\max(D) + 1$ we can consider an $m$-circulant matrix $A$, in which $a_{\bm{i} \bm{j}} = c_d$ if $d \in D$ is congruent to $\bm{j} - \bm{i}$, other entries are zeroes. Since we have the property $c_{-d} = \overline{c_d}$, our matrix is hermitian. By lemma \ref{m_eigen} eigenvalues of $A$ are the numbers
		\[
			\lambda_{\bm{j}} = \sum_{\bm{i}} c_d \omega_n^{j_1 d_1} \ldots \omega_n^{j_m d_m} = f(\omega_n^{j_1}, \ldots, \omega_n^{j_m}) \in \mathbb{R}
		\]
		By the $n_+, n_-, n_0$ denote the numbers of positive, negative and zero eigenvalues among $\lambda_{\bm{j}}$.

		The hermitian form defined by matrix $A$ is associated with the $m$-circulant graph $G(n, D)$ on $n^m$ vertices with the set of distances $D$. By Propositions \ref{sgn_eigen} and \ref{alpha_sgn}
		\begin{equation}
			\label{stepp}
			\frac{\alpha(G(n, D))}{n^m} \leq \min\left(\frac{n_+}{n^m}, \frac{n_-}{n^m} \right) + \frac{n_0}{n^m}
		\end{equation}
		By Theorem \ref{m_alpha_limit} and Lemma \ref{dyadic_approx} taking limit $k \to \infty$ in (\ref{stepp}) leads to
		\[
			\alpha(D) \leq \min(\rho_+(f), \rho_-(f))
		\]
	\end{proof}

	\subsection{Almost periodic functions.}
	\label{sub_apf}

	By $\lambda$ we denote the Lebesgue measure on the real line. For a subset $A \subset \mathbb{R}$ by $A + x$ we mean a shift of the set $A$ by $x \in \mathbb{R}$. Also by $A + B$ we shall mean the set of sums $a + b$ with $a \in A$ and $b \in B$. We use notation $\alpha A$ for the set of numbers $\alpha a$ with $a \in A$. By $\mathbb{R}_+$ we mean the set of positive real numbers. The supremum of a set we denote by $\sup(A)$.

	\textbf{The combinatorial problem.} Let $D$ be a bounded subset of $\mathbb{R}_+$. We call a subset $A \subset \mathbb{R}$ independent if whenever $x, y \in A$ we have $x - y \notin D$. For a positive $l$ by $\alpha(l, D)$ we mean the supremum of all possible measures that an open independent subset of $(0; l)$ could have
	$$ \alpha(l, D) = \sup_{\textrm{open independent } A \subset (0; l)} \lambda(A) $$

	\begin{theorem}
		The following limit
		$$ \lim_{l \to \infty} \frac{\alpha(l, D)}{l} $$
		exists and is equal to $\limsup_{l \to \infty} \frac{\alpha(l, D)}{l}$.
	\end{theorem}
	\begin{proof}
		Denote $\limsup_{l \to \infty} \frac{\alpha(l, D)}{l}$ by $\alpha$. For each $\varepsilon > 0$ we can find a sufficiently large $L$ such that
		$$ \alpha(L, D) \geq \left(\alpha - \frac{\varepsilon}{2}\right)L $$
		Since $\alpha(L, D)$ is a supremum over independent subsets of $(0; L)$, there is a subset $A \subset (0; L)$ of measure at least $(\alpha - \varepsilon)L$. We now want to put $m$ copies of $A$ next to each other with empty segments of length $\sup(D)$ inserted in between to guarantee that the resulting set would still be independent -- two elements $x$ and $y$ belonging to different copies would be at length at least $\sup(D)$ and thus would satisfy the condition $x - y \notin D$. In other words, we consider
		$$ B_m = A \cup (A + (L + \sup(D))) \cup \ldots \cup (A + (m-1)(L + \sup(D))) $$
		It is an independent subset of $(0; (L + \sup(D))m)$ of measure $m \lambda(A)$, so
		$$ \alpha((L + \sup(D))m, D) \geq m(\alpha - \varepsilon) L $$
		Any number $l \geq L + \sup(D)$ can be represented as $m(L + \sup(D)) + l_0$, considering $B_m$ as a subset of $(0; l)$ we get
		$$ \alpha(l, D) \geq m (\alpha - \varepsilon) L $$
		$$ \frac{\alpha(l, D)}{l} \geq (\alpha - \varepsilon) \frac{mL}{l} = (\alpha - \varepsilon) \left(1 - \frac{m \sup(D) + l_0}{m(L + \sup(D) + l_0}\right) $$
		Since $\varepsilon$ is arbitrary and $L$ for that $\varepsilon$ can be taken arbitrarily large, we shall have
		$$ \lim_{l \to \infty} \frac{\alpha(l, D)}{l} \geq \limsup_{l \to \infty} \frac{\alpha(l, D)}{l}, $$
		from which the theorem follows.
	\end{proof}

	We can now introduce
	\begin{definition}
		For a nonempty bounded subset $D \subset \mathbb{R}_+$ define
		$$ \alpha(D) = \lim_{l \to \infty} \frac{\alpha(l, D)}{l} $$
	\end{definition}

	Now assume that $D$ is finite
	$$ D = \{ \alpha_1, \ldots, \alpha_m \} $$
	In a $\mathbb{Z}$-module
	$$ \alpha_1 \mathbb{Z} + \ldots + \alpha_m \mathbb{Z} $$
	we take a basis $a_1, \ldots, a_n$ of positive real numbers. This means that $a_i$ can be expressed as
	$$ a_i = c_{i1} \alpha_1 + \ldots + c_{im} \alpha_m \textrm{ with } c_{ij} \in \mathbb{Z}, $$
	that $a_i$ are $\mathbb{Z}$-linearly independent, and that $\alpha_i$ can be written as
	$$ \alpha_j = d_{j1} a_1 + \ldots + d_{jn} a_n \textrm{ with } d_{ji} \in \mathbb{Z} $$

	This gives rise to vectors $\vec{d}_j \in \mathbb{Z}^n \setminus \{ \bm{0} \}$, and one can consider the combinatorial problem of the previous section for the set of differences $\{ \pm \vec{d}_j \}$. It turns out the two problems are related
	\begin{theorem}
		\label{alpha_r_T}
		We have an inequality\footnote{actually there is an equality}
		$$ \alpha(D) \leq \alpha(\{ \pm \vec{d}_j \}) $$
	\end{theorem}
	\begin{proof}
	For an $\varepsilon > 0$ and a sufficiently large $L$ we take an open independent subset $A$ of $(0; L)$ such that
	$$ \lambda(A) \geq (\alpha(D) - \varepsilon) L $$
	Take a number $M \in (\sup(D); 2\sup(D))$ such that $L + M = T$ together with $a_1, \ldots, a_n$ forms a $\mathbb{Z}$-linearly independent set of numbers. We now consider a set
	$$ B = A + \{\ldots, -2T, -T, 0, T, 2T, \ldots \}, $$
	which would still be independent, because $T > \sup(D)$.

	To each vector $\vec{x} \in \mathbb{Z}^n$ we can put in correspondence
	$$ p(\vec{x}) = x_1 a_1 + \ldots + x_n a_n $$
	For this map $p\colon \mathbb{Z}^n \to \mathbb{R}$ consider a subset $I = p^{-1}(B)$. Note that $I$ would be an independent set in the sense of the combinatorial problem from the previous section. Indeed, assume we have a pair of vectors $\vec{x}, \vec{y} \in I$ such that $\vec{x} - \vec{y} = \vec{d}_j$. Since our map $p$ is linear, we shall have
	$$ p(\vec{x}) - p(\vec{y}) = p(\vec{d}_j) = d_{j1} a_1 + \ldots + d_{jn} a_n = \alpha_j $$
	Together with $p(\vec{x}), p(\vec{y}) \in B$ this contradicts the independence of $B \subset \mathbb{R}$.

	With each $\vec{x} \in \mathbb{Z}^n$ we can associate a point in $[0; T)$
	$$ q(\vec{x}) \equiv p(\vec{x}) \, \textrm{\,mod\,}  T, $$
	i.e. $q(\vec{x}) = p(\vec{x}) - zT$ with $z \in \mathbb{Z}$ and $q(\vec{x}) \in [0; T)$.

	Consider a box $X_k = \{0, 1, \ldots, k - 1\}^n$, we are going to show
	$$ \liminf_{k \to \infty} \frac{|I \cap X_k|}{k^n} \geq \frac{\lambda(A)}{T} $$
	
	We want to estimate the number of points in $X_k$ mapped into $A$ by $q$, since $A$ is an open set and thus a union of intervals
	$$ A = (l_1; r_1) \cup (l_2; r_2) \cup \ldots, $$
	we will first show
	\begin{lemma}
		For a non-empty interval $(l; r) \in (0; L)$ we have
		$$ \lim_{k \to \infty} \frac{|q^{-1}((l; r)) \cap X_k|}{k^n} = \frac{r - l}{T} $$
	\end{lemma}
	\begin{proof}
		We would consider two quantities
		$$ \varphi_k = \frac{|q^{-1}((l; r)) \cap X_k|}{k^n} \quad \phi_k = \frac{|q^{-1}([0;l) \cup (r; T)) \cap X_k|}{k^n} $$
		Note that $\varphi_k + \phi_k$ is not greater than $1$.
		Indeed, the sum corresponds to the preimage of $[0; T) \setminus \{l, r\}$.

		Now we will be working with the $\textrm{mod\,}T$ arithmetic. Numbers $a_1$ and $T$ are $\mathbb{Z}$-linearly independent, i.e.\,the number $\frac{a_1}{T}$ is irrational. By Dirichlet's approximation theorem for an arbitrary positive $\delta < r - l$ we can find a number $t \in \mathbb{Z} \setminus \{0 \}$ such that $t a_1 \textrm{\,mod\,} T \in (0; \delta)$.\footnote{i.e., $ta_1 - Tz \in (0; \delta)$ for some $z \in \mathbb{Z}$}

		Take some $x \in [0; T)$ and imagine that we successively add $ta_1$ to it. We would be "going around" the $\textrm{mod\,}T$ circle and, furthermore, every time we encounter $(l; r)$ we would have to make at least
		$$ \left\lceil\frac{r - l}{\delta}\right\rceil $$
		"jumps" in order to cross the whole segment. The total number of "jumps" needed to get around the circle is at least
		$$ \left\lceil \frac{T}{\delta} \right\rceil $$ This guarantees that every time we make $ \left\lceil \frac{T}{\delta} \right\rceil $ successive jumps we get at least $ \left\lceil\frac{r - l}{\delta}\right\rceil - 1 $ points inside of $(l; r)$. So, if we make $J$ jumps, we can guarantee at least
		$$ \frac{J - \left\lceil \frac{T}{\delta} \right\rceil}{\left\lceil \frac{T}{\delta} \right\rceil}\left(\left\lceil\frac{r - l}{\delta}\right\rceil - 1\right) $$
		points inside $(l; r)$. 

		If one makes $J$ jumps, then together with the initial and the terminal point one has $J + 1$ points in total, thus the overall portion of points inside $(l; r)$ is at least
		$$ \frac{1}{J + 1}\frac{J - \left\lceil \frac{T}{\delta} \right\rceil}{\left\lceil \frac{T}{\delta} \right\rceil}\left(\left\lceil\frac{r - l}{\delta}\right\rceil - 1\right) $$
		Now for sufficiently large $J$ and sufficiently small $\delta$ this number tends to $\frac{r - l}{T}$.

		Every $\vec{i} = X_k$ can be written as
		$$ \vec{i} = (|t|i'_1 + i_1, i_2, \ldots, i_n) $$
		with $i_1 \in \{0, \ldots, |t|-1\}$ and thus
		$$ q(\vec{i}) = |t|i'_1 a_1 + q((i_1, \ldots, i_n)) $$
		For $X^{(0)}_k = \{0, \ldots, |t| - 1\} \times \{0, \ldots, k - 1\}^{n - 1}$ we could consider $q(X^{(0)}_k)$ as the set of starting points from which we make jumps of size $|t|$. In each iteration we would make at least $\left\lfloor \frac{k}{|t|} \right\rfloor$ jumps, which combined with $|X^{(0)}_k| = |t| k^{n - 1}$ guarantees us at least
		$$ |t| k^{n - 1} \frac{\left\lfloor \frac{k}{|t|} \right\rfloor - \left\lceil \frac{T}{\delta} \right\rceil}{\left\lceil \frac{T}{\delta} \right\rceil}\left(\left\lceil\frac{r - l}{\delta}\right\rceil - 1\right) $$
		points corresponding to $(l; r)$.

		So
		$$ \varphi_k \geq \frac{|t|}{k} \frac{\left\lfloor \frac{k}{|t|} \right\rfloor - \left\lceil \frac{T}{\delta} \right\rceil}{\left\lceil \frac{T}{\delta} \right\rceil}\left(\left\lceil\frac{r - l}{\delta}\right\rceil - 1\right), $$
		since the number $\delta$ was chosen before $k$ and the only restriction on $k$ that we could impose in order for the number above to tend to $\frac{r - l}{T}$ is that it is sufficiently large, we realize that for an arbitrary $\varepsilon > 0$ for all sufficiently large $k$ we have
		\begin{equation}
			\label{varphi_k}
			\varphi_k \geq \frac{r - l}{T} - \varepsilon
		\end{equation}
		Also, since $\phi_k$ corresponds to a pair of segments, we could say that for all sufficiently large $k$ we have
		$$ \phi_k \geq \frac{T - (r - l)}{T} - \varepsilon $$
		From
		$$ \varphi_k + \phi_k \leq 1 $$
		it follows that
		$$ \varphi_k \leq \frac{r - l}{T} + \varepsilon $$
		Combined with inequality (\ref{varphi_k}) this gives us
		$$ \varphi_k \in \left[\frac{r - l}{T} - \varepsilon; \frac{r - l}{T} + \varepsilon\right], $$
		from which the conclusion of the theorem follows.
	\end{proof}

	Now
	$$ \liminf_{k \to \infty} \frac{|I \cap X_k|}{k^n} = \liminf_{k \to \infty} \left( \sum_{i} \frac{|q^{-1}((l_i; r_i)) \cap X_k|}{k^n} \right) $$
	Each summand above tends to $\frac{r_i - l_i}{T}$ as $k$ tends to infinity, and, since $\sum_i (r_i - l_i) = \lambda(A)$, we get
	$$ \liminf_{k \to \infty} \frac{|I \cap X_k|}{k^n} \geq \frac{\lambda(A)}{T} \geq \frac{(\alpha(D) - \varepsilon) L}{T} \geq \frac{(\alpha(D) - \varepsilon) L}{L + 2\sup(D)} $$
	
	In the combinatorial problem of the previous section we are concerned with the difference vectors $\vec{d}_j$, denote the maximum modulus $|d_{ji}|$ by $M$, then we can consider $I \cap X_k$ as a subset of $\{0, \ldots, k + M\}^n$. That would give us an independent subset and lead to a lower bound
	\begin{multline*}\alpha(\{\pm \vec{d}_j\}) \geq \frac{|I \cap X_k|}{k^n} \frac{k^n}{(k + M + 1)^n} \geq \frac{(\alpha(D) - \varepsilon) L}{T}\frac{k^n}{(k + M + 1)^n} \\ \geq \frac{(\alpha(D) - \varepsilon) L}{L + 2\sup(D)} \frac{k^n}{(k + M + 1)^n}\end{multline*}
	Since for an arbitrary $\varepsilon > 0$ we can choose a sufficiently large $L$, and then we can choose a sufficiently large $k$, we arrive at
	$$ \alpha(\{\pm\vec{d}_j\}) \geq \alpha(D) $$
	\end{proof}

	\textbf{The analytic problem.} For an arbitrary function $f\colon \mathbb{R} \to \mathbb{R}$ we consider
	\[ \rho_+(f) = \liminf_{A \to \infty} \frac{\lambda(\left\{ x \in [-A; A] \right| f(x) > 0\})}{2A} \]
	\[ \rho_-(f) = \liminf_{A \to \infty} \frac{\lambda(\left\{ x \in [-A; A] \right| f(x) < 0\})}{2A} \]
	\[ \rho(f) = \min(\rho_-(f), \rho_+(f)) \]

	In order to cover the case of an unbounded set of differences $D$ we define
	\begin{equation}\label{alpha_def} \alpha(D) = \inf_{A > 0} \alpha(D \cap (0; A)) \end{equation}

	\begin{theorem}
		\label{pre_spec}
		For $a_v, b_v \in \mathbb{R}$ and $\theta_v \in \mathbb{R}_+$ let
		\[ \sum_{v = 1}^{\infty} a_v \cos(\theta_v x) + b_v \sin(\theta_v x) \]
		be a series uniformly converging to a continuous function $f(x)$ on the real line such that
		\[ m(\varepsilon) = \limsup_{A \to \infty} \frac{\lambda(\left\{ x \in [-A; A] \right| f(x) \in [-\varepsilon; +\varepsilon]\})}{2A} \]
		tends to zero as $\varepsilon$ tends to zero. We have
		\[ \rho(f) \geq \alpha(\{ \theta_v \}) \]
	\end{theorem}
	\begin{proof}
		For an arbitrary $\delta > 0$ we can choose an $\varepsilon$ such that $m(\varepsilon) < \delta$. Now consider a number $m$ such that
		\[ \left|f(x) - \sum_{v = 1}^{m} a_v \cos(\theta_v x) + b_v \sin(\theta_v x)\right| < \varepsilon \]
		for all $x \in \mathbb{R}$. One could note that $f(x)$ and the partial sum of our series could differ in sign only when $|f(x)| \leq \varepsilon$, and since $m(\varepsilon) < \delta$, we shall have
		\[ \rho(f) \geq \rho\left( \sum_{v = 1}^{m} a_v \cos(\theta_v x) + b_v \sin(\theta_v x) \right) - \delta \]

		Now choose a basis of positive numbers $a_1, \ldots, a_n$ in the $\mathbb{Z}$-module formed by $\theta_1, \ldots, \theta_m$, i.e.,\,$a_i$ are $\mathbb{Z}$-linear combinations of $\theta_v$ and also
		\[ \theta_v = d_{v1} a_1 + \ldots + d_{vn} a_n \]
		with $d_{vi} \in \mathbb{Z}$.

		One could rewrite $a_v \cos(\theta_v x) + b_v \sin(\theta_v x)$ as
		\[ \frac{1}{2}(a_v - ib_v) e^{i\theta_v x} + \frac{1}{2} (a_v + ib_v) e^{-i\theta_v x}\]
		Denote $e^{i a_j x}$ by $z_j$, then we can represent functions $e^{\pm i \theta_v x}$ as
		\[ e^{\pm i \theta_v x} = z_1^{\pm d_{v1}} \ldots z_n^{\pm d_{vn}} \]
		So for a Laurent polynomial
		\[ F_m(z_1, \ldots, z_n) = \sum_{i = 1}^{m} \frac{1}{2}(a_v - ib_v) z_1^{d_{v1}}\ldots z_n^{d_{vn}} + \frac{1}{2}(a_v + ib_v) z_1^{-d_{v1}}\ldots z_n^{-d_{vn}} \]
		we have
		\[ f_m(x) = F_m(e^{i a_1 x}, \ldots, e^{i a_n x}) = \sum_{v = 1}^{m} a_v\cos(\theta_v x) + b_v \sin(\theta_v x) \]

		Now by Kronecker--Weyl equidistribution theorem
		\[ \rho(f_m) = \min(\rho_-(F_m), \rho_+(F_m)), \]
		where $\rho_{\pm}(F_m)$ here corresponds to the analytic problem from the previous section. Indeed, we have a map $\sigma\colon \mathbb{R} \to \mathbb{T}^n$ that sends each point of the real line $x$ to the point $(e^{i a_1 x}, \ldots, e^{i a_n x})$ of the torus $\mathbb{T}^n$. Now for each open set $U \subset \mathbb{T}^n$ by Kronecker--Weyl equidistribution theorem we shall have
		\[ \lim_{A \to \infty} \frac{\lambda(\sigma^{-1}(U) \cap [-A; A])}{2A} = \lambda_m(U) \]
		Since $F_m(z_1, \ldots, z_n) > 0$ and $F_m(z_1, \ldots, z_n) < 0$ define open sets on $\mathbb{T}^n$, we can reduce the problem on the real line $\mathbb{R}$ to the problem on the torus $\mathbb{T}^n$.

		For $\mathbb{T}^n$ by Theorem \ref{m_connect} we shall have
		\[ \min(\rho_-(F_m), \rho_+(F_m)) \geq \alpha(\{\pm \vec{d}_j\})\]
		But by Theorem \ref{alpha_r_T}
		\[ \alpha(\{\pm \vec{d}_j\}) \geq \alpha(D) \]
		So
		\begin{multline*} \rho(f) \geq \rho(f_m) - \delta \geq \min(\rho_-(F_m), \rho_+(F_m)) - \delta \geq \\ \alpha(\{\pm \vec{d}_j\}) - \delta \geq \alpha(D) - \delta \geq \alpha(\{\theta_v\}) - \delta \end{multline*}
		The last inequality holds, since $D$ is simply a subset of $\{\theta_v\}$. The choice of $\delta > 0$ was arbitrary, so we can conclude
		\[ \rho(f) \geq \alpha(\{\theta_v\}) \]
	\end{proof}

	\textbf{Corollaries.} Let us establish a generalization of Theorem \ref{spectrum}. We begin with an analogue of Proposition \ref{consecutive}
	\begin{proposition}
		\label{an_consec}
		For $D = [a; b]$ with $b > a > 0$ one has
		\[ \alpha(D) \geq \frac{1}{1 + \frac{b}{a}} = \frac{a}{a + b} \]
	\end{proposition}
	\begin{proof}
		For an independent set
		\[ (0; a) \cup (a + b; (a+b) + a) \cup \ldots \cup (k(a+b); k(a+b) + a) \]
		we have
		\[
			\frac{\alpha(k(a+b) + a, D)}{k(a+b) + a} \geq \frac{(k + 1) a}{k(a+b) + a},
		\]
		which in limit gives us
		\[
			\limsup_{l \to \infty} \frac{\alpha(l, D)}{l} \geq \frac{a}{a + b}
		\]
	\end{proof}

	Similarly, we prove an analogue of Proposition \ref{union}
	\begin{proposition}
		\label{an_union}
		For two bounded subsets $D_1, D_2 \subset \mathbb{R}_+$
		\[ \alpha(D_1 \cup D_2) \geq \alpha(D_1) \alpha(D_2) \]
	\end{proposition}
	\begin{proof}
		For an arbitrary $\varepsilon > 0$ we can find a sufficiently large $L$ such that we have two open sets $A_1, A_2 \subset (0; L)$ with
		\[ \lambda(A_1) \geq (\alpha(D_1) - \varepsilon)L \quad \lambda(A_2) \geq (\alpha(D_2) - \varepsilon)L, \]
		where $A_1$ is independent with respect to $D_1$ and $A_2$ is independent with respect to $D_2$.

		We can assume that $A_1 \cap [L - \sup(D_1); L) = \varnothing$, since otherwise one can take sets $A'_i \subset (0; L')$ with $\lambda(A'_i) \geq (\alpha(D_i) - \frac{\varepsilon}{2}) L'$ for a sufficiently large $L'$ and consider $A_i$ as subsets of $(0; L' + \max(\sup(D_1), \sup(D_2))$, where we would have
		\begin{multline*} \lambda(A'_i) \geq \\ \geq \left( \alpha(D_i) - \frac{\varepsilon}{2}\right) \frac{L'}{L' + \max(\sup(D_1), \sup(D_2))}(L' + \max(\sup(D_1), \sup(D_2))) \end{multline*}
		For a sufficienly large $L'$ this would result in
		\[ \lambda(A'_i) \geq \left(\alpha(D_i) - \frac{2\varepsilon}{3}\right) (L' + \max(\sup(D_1), \sup(D_2))) \]
		and $A'_i \cap [L'; L' + \max(\sup(D_1), \sup(D_2))) = \varnothing$ as desired. Furthermore, since $A'_i$ being open sets are unions of intervals, by $\sigma$-additivity of the Lebesgue measure one can replace them by finite unions of intervals with
		\[ \lambda(A'_i) \geq \left(\alpha(D_i) - \varepsilon\right) (L' + \max(\sup(D_1), \sup(D_2))) \]

		Because of the conditions discussed in the above paragraph the sets
		\[ B_i = A_i \cup (A_i + L) \cup \ldots \cup (A_i + L(k - 1)) \subset (0; kL) \]
		are also independent. Now for a number $x \in [0; L]$ the set
		\[ Y_x = (B_1 + x) \cap B_2 \subset (0; kL)\]
		would be independent with respect to $D_1 \cup D_2$, since $B_1 + x$ is independent with respect to $D_1$ as a shift, and subsets of independent sets are independent -- our set $Y_x$ is a subset of both $B_1 + x$ and $B_2$. 

		Because $B_i$ are finite unions of intervals, the function $\lambda(Y_x)$ is continuous. One would want to estimate
		\[ \max_{x \in [0; L]} \lambda(Y_x) \]
		By an indicator function $\chi_A(x)$ we mean a function that is equal to $1$ on $A$ and $0$ outside. We could note that
		\begin{multline*}
			\int_{0}^{L} \lambda(Y_x) dx = \int_{0}^{L} \lambda((B_1+ x) \cap B_2) dx \\ = \int_{0}^{L} \int_{-\infty}^{\infty} \chi_{B_1 + x}(y) \chi_{B_2}(y) dy dx = \int_{0}^{L} \int_{-\infty}^{\infty} \chi_{B_1}(y - x) \chi_{B_2}(y) dy dx \\ = \int_{-\infty}^{\infty} \chi_{B_2}(y) \int_{0}^{L} \chi_{B_1}(y - x) dx dy \geq \int_{L}^{kL} \chi_{B_2}(y) \int_{0}^{L} \chi_{B_1}(y - x) dx dy \\ = \int_{L}^{kL} \chi_{B_2}(y) \lambda(B_1 \cap (y - L; y)) dy
		\end{multline*}
		Since $B_1$ as a set is $L$-periodic inside $(0; kL)$ and $(y - L; y) \subset (0; kL)$ we shall have $\lambda(B_1 \cap (y - L; y)) = \lambda(A_1)$
		\[
			\int_{0}^{L} \lambda(Y_x) dx \geq \lambda(A_1) \int_{L}^{kL} \chi_{B_2}(y) dy = (k - 1) \lambda(A_1) \lambda(A_2)
		\]
		And we get
		\[
			\max_{x \in [0;L]} \lambda(Y_x) \geq \frac{1}{L}\int_{0}^{L} \lambda(Y_x) dx \geq \frac{k - 1}{L} \lambda(A_1) \lambda(A_2)
		\]
		Since $Y_x$ is an independent set for $D_1 \cup D_2$, we should have
		\begin{multline*}
			\frac{\alpha_{kL}(D_1 \cup D_2)}{kL} \geq \frac{k - 1}{k} \frac{\lambda(A_1)}{L} \frac{\lambda(A_2)}{L} \\ \geq \frac{k - 1}{k} (\alpha(D_1) - \varepsilon)(\alpha(D_2) - \varepsilon)
		\end{multline*}
		Since $\varepsilon$ here was chosen arbitrary and $k$ can be taken to be arbitrarily large, we arrive at
		\[
			\alpha(D_1 \cup D_2) \geq \alpha(D_1) \alpha(D_2)
		\]
		as desired.
	\end{proof}

	\begin{proposition}
		\label{prod_inf}
		For a sequence $0 < a_1 < b_1 < a_2 < b_2 < \ldots$\footnote{could be finite or infinite} we shall have
		\[
			\alpha([a_1; b_1] \cup [a_2; b_2] \cup \ldots) \geq \prod_{i} \frac{1}{1 + \frac{b_i}{a_i}}
		\]
	\end{proposition}
	\begin{proof}
		Follows by combining Propositions \ref{an_consec} and \ref{an_union} and taking limit in \ref{alpha_def}.
	\end{proof}

	Finally, we have an analogue of Theorem \ref{spectrum}.
	\begin{theorem}
		\label{gen_spectrum}
		If in Theorem \ref{pre_spec} we have
		\[ \{\theta_v\} \subset  [a_1; b_1] \cup [a_2; b_2] \cup \ldots, \]
		then
		\[
			\rho\left( \sum_{v = 1}^{\infty} a_v \cos(\theta_v x) + b_v \sin(\theta_v x) \right) \geq \prod_{i} \frac{1}{1 + \frac{b_i}{a_i}}
		\]
	\end{theorem}
	\begin{proof}
		Follows by plugging the inequality from Proposition \ref{prod_inf} into the inequality from Proposition \ref{pre_spec}.
	\end{proof}

	\subsection{Multivalued algebraic functions.}
	\label{sub_maf}
	For a set of integers $A$ by $-A$ we mean the set of all numbers $-a$ with $a \in A$.

	\textbf{The combinatorial problem.} Here we introduce the concept of the $m$-block circulant graph.

	\begin{definition}
		An $m \times m$ matrix of finite integer sets $D = (D_{ij})$ is called a matrix of differences if
		\[
			\forall i, j\colon D_{ji} = -D_{ij}
		\]
		as sets of integers.
	\end{definition}

	With an $m \times m$ matrix of differences $D = (D_{ij})$ we associate a family of graphs $G(n, D)$. Each $G(n, D)$ has
	\[
		\{(i, u) \mid 0 \leq i \leq m - 1, 0 \leq u \leq n - 1\}
	\]
	as its set of vertices, numbers $i$ and $u$ will be called the block and the position of the vertex $(i, u)$, respectively. Two vertices $(i, u)$ and $(j, v)$ are adjacent if $v - u$ is congruent to some number from $D_{ij}$ modulo $n$. In simple terms, our graph $G(n, D)$ consists of $m$ blocks of $n$ vertices and blocks are connected to each other in a <<homogeneous>> way described by the matrix of differences, each separate block is a circulant graph.

	\begin{definition}
		By the $m$-block circulant graph on $nm$ vertices with the $m \times m$ matrix of differences $D = (D_{ij})$ we mean the graph $G(n, D)$ described above.
	\end{definition}

	Note that, since we could have $0 \in D_{ii}$, our graphs $G(n, D)$ can have loops.

	By $\max(D)$ we mean $\max_{i, j} \max(D_{ij})$.

	\begin{theorem}
		\label{block_alpha_limit}
		\[
			\lim_{n \to \infty} \frac{\alpha(G(n, D))}{nm} = \sup_{n} \frac{\alpha(G(n, D))}{nm}
		\]
	\end{theorem}
	\begin{proof}
		Let
		\[
			\alpha_n = \frac{\alpha(G(n, D))}{nm}
		\]
		and let $S = \{(i_1, u_1), \ldots, (i_k, u_k)\}$ be a maximum independent set of $G(n, D)$. We can construct an independent set of $G(tn, D)$ of size $tk$ by repetition
		\begin{multline*}
			S^{\prime} = \{ (i_1, u_1), \ldots, (i_k, u_k), (i_1, u_1 + n), \ldots, (i_k, u_k + n),\\ \ldots, (i_1, u_1 + (t-1)n), \ldots, (i_k, u_k + (t-1)n)\}
		\end{multline*}
		It means that
		\[
			\alpha_{tn} \geq \alpha_{n}
		\]

		Also $S$ is an independent set of $G_{n + k}$ for $k \geq \max(D)$, so
		\[
			\alpha_{n + k} \geq \frac{n}{n + k} \alpha_{n}
		\]

		The rest of the proof is analogous to the proof of Theorem \ref{alpha_limit}.
	\end{proof}

	\begin{definition}
		For a matrix of differences $D$ define
		\[
			\alpha(D) = \lim_{n \to \infty} \frac{\alpha(G(n, D))}{nm}
		\]
	\end{definition}

	\medskip

	\textbf{The analytic problem.} A Laurent polynomial in one variable is a function of the form
	\[
		f(z) = \sum_{k = -N}^{N} c_k z^k
	\]
	We define its conjugate polynomial $\overline{f}$ as
	\[
		\overline{f}(z) = \sum_{k = -N}^{N} \overline{c_k} z^{-k}
	\]
	When $z$ is on the unit circle,
	\[
		\overline{f}(z) = \overline{f(z)}
	\]

	We call an $m \times m$ matrix of Laurent polynomials $F = (f_{ij})$ self-conjugate if
	\[
		\forall i, j\colon f_{ji}(z) = \overline{f_{ij}}(z)
	\]
	By evaluating all polynomial entries at some complex number $z$ we get a complex matrix $F(z) = (f_{ij}(z))$. When $z$ is on the unit circle, the matrix $F(z)$ is hermitian and thus only has real eigenvalues.

	Matrix $F$ could be regarded as a matrix over the field of complex rational functions $\mathbb{C}(z)$. By $\det F \in \mathbb{C}[z, z^{-1}]$ we denote its determinant, which is also a Laurent polynomial. For $z \neq 0$ we have
	\[
		(\det F)(z) = \det(F(z)),
	\]
	where on the right side we have the determinant of the complex matrix $F(z)$. We say that matrix $F$ is non-degenerate if $\det F$ is a non-zero polynomial.

	The characteristic polynomial $p_{F}$ of our matrix $F$ could be written in the form
	\[
		p_{F}(t, z) = t^m + p_1(z) t^{m - 1} + \ldots + p_m(z),
	\]
	where $p_i$ are Laurent polynomials and $p_m(z) = (-1)^m \det F$. Denote the characteristic polynomial of $F(z)$ by $p_{F(z)}$, for non-zero $z$ we have
	\[
		p_{F(z)}(t) = p_F(t, z)
	\]
	We conclude that, when $z$ lies on the unit circle, the polynomial $p_{F(z)}(t)$ only has real roots and is equal to
	\[
		t^m + p_1(z) t^{m - 1} + \ldots + p_m(z)
	\]

	Assume that $F$ is non-degenerate. The $m$ roots of a polynomial of degree $m$ depend continuously on the coefficients. Functions $p_j(e^{i \theta})$ are continuous. Thus as $z$ goes around the unit circle the roots of the polynomial $p_{F(z)}(t)$ move continuously on the real line, and as they move they can pass through $0$ only a finite number of times, since the non-zero polynomial $\det F$, which coincides with $p_{F(z)}(0)$ for $z \neq 0$, can have only a finite number of roots on the unit circle. If we remove the points where $\det F(z) = 0$, we split the unit circle into a finite number of circular arcs $\gamma_1, \ldots, \gamma_k$, on each arc $\gamma_i$ the number of positive roots of $p_{F(z)}$ with multiplicities counted is constant, denote this number by $n^+_i$, denote the length of the arc $\gamma_i$ by $l(\gamma_i)$. We define $\rho_+(F)$ as
	\[
		\rho_+(F) = \frac{1}{2\pi m} \sum_{i} n^+_i l(\gamma_i)
	\]
	Analogously, we can define $\rho_-(F)$ for negative roots. We should have
	\[
		\rho_+(F) + \rho_-(F) = 1
	\]

	In simple terms, $\rho_+(F)$ and $\rho_-(F)$ measure the fraction of positive and negative eigenvalues, respectively, that the matrix $F(z)$ has as $z$ moves along the unit circle, or in other words, the fraction of positive and negative roots, respectively, of the polynomial
	\[
		t^m + p_1(z) t^{m - 1} + \ldots + p_m(z),
	\]
	when $z$ lies on the unit circle.

	\medskip

	\textbf{The connection.} 
	By $\omega_n$ denote $e^{\frac{2\pi i}{n}}$. Define vectors $w^{(k)}$ for $0 \leq k \leq n - 1$ by
	\[
		w^{(k)}_{j} = \omega_n^{kj}
	\]

	Let $A$ be a hermitian $nm \times nm$ matrix consisting of blocks $A_{ij}$ of size $n \times n$
	\[
		A = \begin{bmatrix}
			A_{11} & \ldots & A_{1m} \\
			\vdots & \ddots & \vdots \\
			A_{m1} & \ldots & A_{mm}
		\end{bmatrix}
	\]
	Assume that each block $A_{ij}$ is a circulant matrix. By $\lambda^{(k)}_{ij}$ denote the eigenvalue of $A_{ij}$ that corresponds to the vector $w^{(k)}$
	\[
		A_{ij} w^{(k)} = \lambda^{(k)}_{ij} w^{(k)}
	\]

	For each $k$ we can put the eigenvalues $\lambda^{(k)}_{ij}$ together to form a matrix
	\[
		A^{(k)} = \begin{bmatrix}
			\lambda^{(k)}_{11} & \ldots & \lambda^{(k)}_{1m} \\
			\vdots & \ddots & \vdots \\
			\lambda^{(k)}_{m1} & \ldots & \lambda^{(k)}_{mm}
		\end{bmatrix}
	\]

	\begin{lemma}
		\label{block_eigen}
		The eigenvalues of the matrix $A$ coincide with the eigenvalues of the matrices
		\[
			A^{(0)}, \ldots, A^{(n - 1)}
		\]
		with multiplicities counted.
	\end{lemma}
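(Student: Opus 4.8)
The plan is to carry out, at the level of $n\times n$ blocks, the same diagonalization trick used in Lemma \ref{m_eigen}: the single family $w^{(0)}, \ldots, w^{(n-1)}$ is a common eigenbasis for every circulant block $A_{ij}$, so it cuts $\mathbb{C}^{nm}$ into $n$ mutually orthogonal, $A$-invariant subspaces — one for each index $k$ — and on the $k$-th of these $A$ acts precisely through the $m\times m$ matrix $A^{(k)}$. The eigenvalue statement then follows by pooling together the spectra of the restrictions.

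First I would set up the invariant subspaces. For $k \in \{0, \ldots, n-1\}$ and $i \in \{1, \ldots, m\}$ let $u^{(k)}_i \in \mathbb{C}^{nm}$ be the vector whose $i$-th block of $n$ coordinates equals $w^{(k)}$ and whose remaining coordinates vanish, and put $V_k = \operatorname{span}\{u^{(k)}_1, \ldots, u^{(k)}_m\}$. Reading the product $A u^{(k)}_i$ block by block, only the $i$-th block column of $A$ contributes, so the $j$-th block of $A u^{(k)}_i$ is $A_{ji}w^{(k)} = \lambda^{(k)}_{ji} w^{(k)}$, which is $\lambda^{(k)}_{ji}$ times the $j$-th block of $u^{(k)}_j$. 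Hence
\[
	A u^{(k)}_i = \sum_{j=1}^{m} \lambda^{(k)}_{ji}\, u^{(k)}_j ,
\]
so $V_k$ is $A$-invariant. Since $w^{(k)} \neq 0$, the vectors $u^{(k)}_1, \ldots, u^{(k)}_m$ are linearly independent, $\dim V_k = m$, and in the basis $(u^{(k)}_1, \ldots, u^{(k)}_m)$ the restriction $A|_{V_k}$ is represented by the matrix whose $(j,i)$-entry is $\lambda^{(k)}_{ji}$, i.e. by $A^{(k)}$ itself.

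Next I would show $\mathbb{C}^{nm} = \bigoplus_{k=0}^{n-1} V_k$ as an orthogonal direct sum. Exactly as in the proof of Lemma \ref{m_eigen}, $(w^{(k)}, w^{(l)}) = \sum_{j=0}^{n-1} \omega_n^{(k-l)j} = 0$ whenever $k \neq l$; therefore for $k \neq l$ each $u^{(k)}_i$ is orthogonal to each $u^{(l)}_{i'}$ — their supports are disjoint when $i \neq i'$, and the inner product is $(w^{(k)}, w^{(l)}) = 0$ when $i = i'$. So $V_0, \ldots, V_{n-1}$ are pairwise orthogonal, and as $\sum_k \dim V_k = nm = \dim \mathbb{C}^{nm}$ they span everything.

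Finally, an operator that preserves each summand of a direct-sum decomposition of the whole space has characteristic polynomial equal to the product of the characteristic polynomials of its restrictions, so
\[
	\chi_A(t) = \prod_{k=0}^{n-1} \chi_{A|_{V_k}}(t) = \prod_{k=0}^{n-1} \chi_{A^{(k)}}(t),
\]
which is exactly the assertion that the multiset of eigenvalues of $A$ is the union of the multisets of eigenvalues of $A^{(0)}, \ldots, A^{(n-1)}$. I do not expect a genuine obstacle here: the whole argument is a bookkeeping exercise with block/position indices, and the only point deserving a moment of care is the verification in the second paragraph that the matrix of $A|_{V_k}$ in the chosen basis is literally $A^{(k)}$ (and not a permuted or transposed variant) — the block computation $A u^{(k)}_i = \sum_j \lambda^{(k)}_{ji} u^{(k)}_j$ settles this, and in any case a matrix and its transpose share their eigenvalues with multiplicity.
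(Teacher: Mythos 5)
Your proof is correct and follows essentially the same route as the paper: you use the same vectors $u^{(k)}_i$ (the paper's $w^{(k,i)}$), compute $A u^{(k)}_i = \sum_j \lambda^{(k)}_{ji} u^{(k)}_j$, and conclude that $A$ is block-diagonal with blocks $A^{(k)}$ in this basis. You merely spell out a few steps the paper leaves implicit (orthogonality, dimension count, and the factorization of the characteristic polynomial).
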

	\begin{proof}
		Define the vector $w^{(k, i)}$ as
		\[
			w^{(k, i)} = \begin{bmatrix}
				0 & \ldots & 0 & \smash[b]{\underbrace{\begin{matrix} 1 & \omega_n^k & \ldots & \omega_n^{n - 1} \end{matrix}}_{\textrm{positions from } in \textrm{ to } (i + 1)n - 1}} & 0 & \ldots & 0
			\end{bmatrix}^T
		\] \vspace{0pt}

		\noindent In other words, we put the vector $w^{(k)}$ in the place of the $i$-th block. These vectors form a basis.

		Note that
		\[
			A w^{(k, i)} = \lambda_{1i} w^{(k, 1)} + \ldots + \lambda_{m i} w^{(k, m)}
		\]
		So in the basis formed by vectors $w^{(k, i)}$ matrix $A$ must be
		\[
			\begin{bmatrix}
				A^{(0)} & 0 & \ldots & 0 \\
				0 & A^{(1)} & \ldots & 0 \\
				\vdots & \vdots & \vdots & \vdots \\
				0 & 0 & \ldots & A^{(n - 1)}
			\end{bmatrix}
		\]
		In other words, the matrix above has $A^{(i)}$ as its diagonal blocks, other entries are zero. This finishes the proof.
	\end{proof}

	\begin{definition}
		We say that a matrix of differences $D = (D_{ij})$ majorizes a self-conjugate matrix of Laurent polynomials $F = (f_{ij})$ if each $f_{ij}$ is of the form
		\[
			f_{ij}(z) = \sum_{d \in D_{ij}} c_d z^d
		\]
	\end{definition}

	We are now ready to generalize Theorem \ref{connect}.

	\begin{theorem}
		\label{block_connect}
		For a non-degenerate self-conjugate matrix of Laurent polynomials $F = (f_{ij})$ majorized by the matrix of differences $D = (D_{ij})$ we have
		\[
			\alpha(D) \leq \min(\rho_+(F), \rho_-(F))
		\] 
	\end{theorem}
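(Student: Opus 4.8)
The plan is to run the argument of Theorem \ref{connect} (and of its $m$-circulant version Theorem \ref{m_connect}) one more time, now with a block circulant hermitian matrix whose individual blocks are ordinary circulant matrices, so that Lemma \ref{block_eigen} lets us read off its spectrum from the matrices $F(\omega_{n}^{k})$. Fix $n \geq 2\max(D)+1$ and let $A$ be the $nm \times nm$ hermitian matrix built from $m\times m$ blocks $A_{ij}$ of size $n\times n$, where $A_{ij}$ is the circulant matrix whose first row has the coefficient of $z^{d}$ in $f_{ij}(z)=\sum_{d\in D_{ij}}c_{d}z^{d}$ placed in position $d\bmod n$ for each $d\in D_{ij}$, and zeros elsewhere. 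Since every element of $D_{ij}$ lies in $[-\max(D),\max(D)]$ and $n\geq 2\max(D)+1$, distinct differences receive distinct residues, so this is well defined. The relations $f_{ji}=\overline{f_{ij}}$ and $D_{ji}=-D_{ij}$ make $A$ hermitian, and a non-zero $(u,v)$ entry in block $A_{ij}$ forces $v-u\equiv d\pmod n$ for some $d\in D_{ij}$; hence the hermitian form of $A$ is associated with the $m$-block circulant graph $G_n$ of the matrix of differences $D$. By Propositions \ref{sgn_eigen} and \ref{alpha_sgn},
\[
\frac{\alpha(G_n)}{nm}\ \leq\ \min\!\left(\frac{n_{+}}{nm},\frac{n_{-}}{nm}\right)+\frac{n_{0}}{nm},
\]
where $n_{+},n_{-},n_{0}$ are the numbers of positive, negative and zero eigenvalues of $A$.

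Next I would identify these eigenvalues. The eigenvalue of the circulant $A_{ij}$ on the vector $w^{(k)}$ is $\sum_{d\in D_{ij}}c_{d}\omega_{n}^{kd}=f_{ij}(\omega_{n}^{k})$, so the matrix $A^{(k)}$ of Lemma \ref{block_eigen} is exactly $F(\omega_{n}^{k})$, a hermitian $m\times m$ matrix. Therefore the spectrum of $A$, counted with multiplicity, is the union over $k=0,\dots,n-1$ of the spectra of $F(\omega_{n}^{k})$; writing $p^{+}(z),p^{-}(z),p^{0}(z)$ for the number of positive, negative and zero eigenvalues of $F(z)$, we obtain $n_{\pm}=\sum_{k=0}^{n-1}p^{\pm}(\omega_{n}^{k})$ and $n_{0}=\sum_{k=0}^{n-1}p^{0}(\omega_{n}^{k})$.

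Then I would pass to the limit $n\to\infty$. The left-hand side of the displayed inequality tends to $\alpha(D)$ by Theorem \ref{block_alpha_limit}. For $n_{+}/(nm)$: the non-zero Laurent polynomial $\det F$ has finitely many roots on the unit circle, which cut it into the arcs $\gamma_{1},\dots,\gamma_{r}$, and $p^{+}$ equals the constant $n^{+}_{i}$ on the open arc $\gamma_{i}$ while $p^{+}\leq m$ everywhere. Splitting $\sum_{k}p^{+}(\omega_{n}^{k})$ into the finitely many (hence $O(1)$ in $n$) terms with $\omega_{n}^{k}$ a root of $\det F$ and the remaining terms $\sum_{i}n^{+}_{i}\,\#\{k:\omega_{n}^{k}\in\gamma_{i}\}$, and using $\#\{k:\omega_{n}^{k}\in\gamma_{i}\}=\frac{n\,l(\gamma_{i})}{2\pi}+O(1)$, we get
\[
\frac{n_{+}}{nm}\ \longrightarrow\ \frac{1}{2\pi m}\sum_{i}n^{+}_{i}\,l(\gamma_{i})=\rho_{+}(F).
\]
The identical computation gives $n_{-}/(nm)\to\rho_{-}(F)$ and $n_{0}/(nm)\to 0$, so the displayed inequality becomes $\alpha(D)\leq\min(\rho_{+}(F),\rho_{-}(F))$.

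The main obstacle is exactly this last step: the convergence $n_{+}/(nm)\to\rho_{+}(F)$ is a Riemann-sum statement for the step function $p^{+}$ over the equidistributed nodes $\omega_{n}^{k}$, and the one genuinely delicate point is the handful of nodes that land precisely on a zero of $\det F$, where the sign count need not agree with the value of $p^{+}$ on either neighbouring arc; it is the non-degeneracy hypothesis on $F$ that keeps that exceptional set $O(1)$ in size and hence harmless after dividing by $nm$. The remaining verifications — that $A$ is hermitian, that the $m$-block circulant graph $G_n$ is genuinely the graph associated with $A$, and that $A^{(k)}=F(\omega_{n}^{k})$ — are routine bookkeeping once $n\geq 2\max(D)+1$ is assumed.
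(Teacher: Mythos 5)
Your proposal is correct and follows essentially the same approach as the paper: build the block circulant hermitian matrix $A$ from circulant blocks $A_{ij}$, observe that its hermitian form is associated with the $m$-block circulant graph $G_n$, invoke Propositions \ref{sgn_eigen} and \ref{alpha_sgn} and Lemma \ref{block_eigen} to express the signature of $A$ in terms of the spectra of the $F(\omega_n^k)$, then pass to the limit via Theorem \ref{block_alpha_limit}. Your treatment of the Riemann-sum limit $n_{\pm}/(nm)\to\rho_{\pm}(F)$, including the role of non-degeneracy in bounding $n_0$, is somewhat more explicit than the paper's but matches its reasoning.
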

	\begin{proof}
		Assume that $n \geq 2\max(D) + 1$. For each Laurent polynomial
		\[
			f_{ij}(z) = \sum_{d \in D_{ij}} c_d z^d
		\]
		we can construct an $n \times n$ circulant matrix $A_{ij} = (a_{lr})$, in which $a_{lr} = c_d$ whenever $r - l$ is congruent to $d \in D_{ij}$ modulo $n$, other entries are zeroes. Note that
		\[
			A_{ij} w^{(k)} = f_{ij}(\omega_n^k) w^{(k)} = \lambda^{(k)}_{ij} w^{(k)}
		\]

		We can put $n \times n$ blocks $A_{ij}$ together to form the matrix
		\[
			A = \begin{bmatrix}
				A_{11} & \ldots & A_{1m} \\
				\vdots & \ddots & \vdots \\
				A_{m1} & \ldots & A_{mm}
			\end{bmatrix}
		\]
		Matrix $F$ is self-conjugate, meaning
		\[
			f_{ji}(z) = \overline{f_{ij}}(z) = \sum_{d \in D_{ij}} \overline{c_d} z^{-d}
		\]
		Thus $A_{ji}$ is the conjugate transpose of $A_{ij}$
		\[
			A_{ji} = A_{ij}^{\mathbf{H}}
		\]
		From this we see that matrix $A$ is hermitian.

		By Lemma \ref{block_eigen} the eigenvalues of $A$ are the eigenvalues of matrices
		\[
			A^{(k)} = \begin{bmatrix}
				\lambda^{(k)}_{11} & \ldots & \lambda^{(k)}_{1m} \\
				\vdots & \ddots & \vdots \\
				\lambda^{(k)}_{m1} & \ldots & \lambda^{(k)}_{mm}
			\end{bmatrix} = \begin{bmatrix}
				f_{11}(\omega_n^k) & \ldots & f_{1m}(\omega_n^k) \\
				\vdots & \ddots & \vdots \\
				f_{m1}(\omega_n^k) & \ldots & f_{mm}(\omega_n^k)
			\end{bmatrix} = F(\omega_n^k),
		\]
		where $k$ ranges from $0$ to $n - 1$. By $n_+, n_-$ and $n_0$ denote the number of positive, negative and zero eigenvalues of $A$ with multiplicities counted.

		The hermitian form defined by matrix $A$ is associated with the $m$-block circulant graph $G(n, D)$ on $nm$ vertices with matrix of differences $D$. By Propositions \ref{sgn_eigen} and \ref{alpha_sgn}
		\begin{equation}
			\label{block_step}
			\frac{\alpha(G(n, D))}{nm} \leq \min\left(\frac{n_+}{nm}, \frac{n_-}{nm} \right) + \frac{n_0}{nm}
		\end{equation}
		By Theorem \ref{block_alpha_limit}
		\[
			\lim_{n \to \infty} \frac{\alpha(G(n, D))}{nm} = \alpha(D)
		\]
		A non-zero polynomial $\det F$ has only finitely many roots on the unit circle, so the number $n_0$ is bounded. And since $n_+$ and $n_-$ count the positive and negative numbers, respectively, of eigenvalues of matrices $F(\omega_n^k)$ as $\omega_n^k$ ranges through all $n$-th roots of unity, we shall have
		\[
			\lim_{n \to \infty} \frac{n_+}{nm} = \rho_+(F) \quad \lim_{n \to \infty} \frac{n_-}{nm} = \rho_-(F) \quad \lim_{n \to \infty} \frac{n_0}{nm} = 0
		\]

		By taking limit $n \to \infty$ in (\ref{block_step}) we conclude
		\[
			\alpha(D) \leq \min(\rho_+(F), \rho_-(F))
		\]
	\end{proof}

	\nocite{*}
	\printbibliography

@article{MR741802,
  title={An extremal problem for polynomials},
  author={A. G. Babenko},
  journal={Mathematical notes of the Academy of Sciences of the USSR},
  volume={35},
  pages={181--186},
  year={1984},
  publisher={Springer},
}

@article {MR1438987,
	title={Two external problems for trigonometric polynomials},
	author={V. A. Yudin},
	journal={Sbornik: Mathematics},
	volume={187},
	number={11},
	pages={1721},
	year={1996},
	publisher={IOP Publishing},
}

@article{10.2307/27642333,
	title={Biased trigonometric polynomials},
	author={H. L. Montgomery and U. M. A. Vorhauer},
	journal={The American Mathematical Monthly},
	volume={114},
	number={9},
	pages={804--809},
	year={2007}
}

@article {MR1620046,
	title={An extremal problem for algebraic polynomials with zero mean value on an interval},
	author={Arestov, V. V. and Raevskaya, V. Yu.},
	journal={Matematicheskie Zametki},
	volume={62},
	number={3},
	pages={332--342},
	year={1997}
}

@article{MR1459841,
  title={On functions with zero mean over a finite group},
  author={Tabachnikov, S. L.},
  journal={Functional Analysis and Its Applications},
  volume={31},
  number={1},
  pages={73--74},
  year={1997}
}

@article{MR1629883,
	title={An extremum problem for distribution functions},
	author={Yudin, V. A.},
	journal={Mathematical Notes},
	volume={63},
	number={2},
	pages={279--282},
	year={1998},
	publisher={Springer}
}

@article{10.1137/0514022,
  title={Extremal problems for positive-definite bandlimited functions. II. Eventually negative functions},
  author={Logan, B. F.},
  journal={SIAM Journal on Mathematical Analysis},
  volume={14},
  number={2},
  pages={253--257},
  year={1983}
}

@article{MR1963176,
	title={Positive values of polynomials},
	author={Yudin, V. A.},
	journal={Mathematical Notes},
	volume={72},
	pages={440--443},
	year={2002}
}

@article{Tikhanovtseva2015,
  title={The rate of the smallest value of the weighted measure of the nonnegativity set for polynomials with zero mean value on a closed interval},
  author={K. S. Tikhanovtseva},
  journal={Proceedings of the Steklov Institute of Mathematics},
  volume={288},
  number={Suppl 1},
  pages={195--201},
  year={2015}
}

@article{Ulanovskii2006,
  title={The Sturm-Hurwitz theorem and its extensions},
  author={Ulanovskii, A.},
  journal={Journal of Fourier Analysis and Applications},
  volume={12},
  pages={629--643},
  year={2006},
  publisher={Springer}
}

@article{Kozma2006OnTG,
	author = {G. Kozma and F. Oravecz},
	title = {{On the gaps between zeros of trigonometric polynomials.}},
	volume = {28},
	journal = {Real Analysis Exchange},
	number = {2},
	pages = {447 -- 454},
	year = {2002},
}

@article{MR1737364,
	title={Two related extremal problems for entire functions of several variables},
	author={Berdysheva, E. E.},
	journal={Mathematical Notes},
	volume={66},
	pages={271--282},
	year={1999}
}

@article{MR2907174,
  title={On an extremal problem for algebraic polynomials with zero mean value on a multidimensional sphere},
  author={Deikalova, M. V.},
  journal={Izv. Ural. Gos. Univ.},
  volume={9},
  pages={42--54},
  year={2006}
}

@article{carraher2016independence,
  title={On the independence ratio of distance graphs},
  author={Carraher, James M and Galvin, David and Hartke, Stephen G and Radcliffe, AJ and Stolee, Derrick},
  journal={Discrete Mathematics},
  volume={339},
  number={12},
  pages={3058--3072},
  year={2016}
}

@article{lih1999star,
  title={Star extremal circulant graphs},
  author={Lih, Ko-Wei and Liu, Daphne Der-Fen and Zhu, Xuding},
  journal={SIAM Journal on Discrete Mathematics},
  volume={12},
  number={4},
  pages={491--499},
  year={1999}
}
\end{document}